\documentclass[11pt,a4paper]{amsart}
\usepackage{amssymb}
\usepackage{amsmath}
\usepackage{amsfonts}
\usepackage[english]{babel}
\usepackage[T1]{fontenc}
\usepackage[latin1]{inputenc}
\usepackage{amsthm}
\usepackage[all]{xy}
\xyoption{all}
\usepackage{bbm}
\usepackage{hyperref}
\usepackage{fixltx2e}
\usepackage{bbold}
\usepackage{dsfont}
\usepackage{verbatim}
\usepackage{graphicx}
\usepackage{tikz,tikz-cd}
\usepackage[numbers]{natbib}
\usepackage[shortlabels]{enumitem}
\usepackage{mathrsfs}
\usepackage{adjustbox}
\usetikzlibrary{matrix, arrows}

\newcommand{\N}{\mathbb{N}}

\newcommand{\K}{\mathcal{K}}
\newcommand{\Nu}{\mathcal{N}}
\newcommand{\A}{\mathcal{A}}
\newcommand{\I}{\mathcal{I}}
\newcommand{\J}{\mathcal{J}}
\newcommand{\QN}{\mathcal{QN}}
\newcommand{\CA}{\mathcal{CA}}

\newcommand{\SK}{\mathcal{SK}}
\newcommand{\F}{\mathcal{F}}
\newcommand{\V}{\Vert\cdot\Vert}
\newcommand{\VI}{\Vert\cdot\Vert_\mathcal I}

\theoremstyle{definition}
\newtheorem{theorem}{Theorem}[section]
\newtheorem{proposition}[theorem]{Proposition}
\newtheorem{lemma}[theorem]{Lemma}

\newtheorem{question}[theorem]{Question}

\newtheorem{remark}[theorem]{Remark}

\newtheorem{eexample}[theorem]{Example}
\newtheorem{fact}[theorem]{Fact}
\theoremstyle{plain}
\numberwithin{equation}{section}

\title[On approximation properties]{On approximation properties related to unconditionally $p$-compact operators and Sinha-Karn $p$-compact operators}
\author[Henrik Wirzenius]{Henrik Wirzenius}
\address{Henrik Wirzenius: Department of Mathematics and Statistics, Box 68, Pietari Kalmin katu 5,
FI-00014 University of Helsinki, Finland}
\email{henrik.wirzenius@helsinki.fi}
\subjclass[2010]{46B28, 47B07, 47B10}
\keywords{Banach operator ideals, approximation properties, unconditionally $p$-compact operators, Sinha-Karn $p$-compact operators}
\thanks{This work was supported by the Magnus Ehrnrooth Foundation and the Swedish Cultural Foundation in Finland}
\begin{document}

\begin{abstract}
We establish new results on the $\mathcal I$-approximation property for the Banach operator ideal $\mathcal I=\mathcal{K}_{up}$ of the unconditionally $p$-compact operators in the case of $1\le p<2$. As a consequence of our results, we provide a negative answer for the case $p=1$ of a problem posed by J.M. Kim (2017). Namely, the $\mathcal K_{u1}$-approximation property implies neither the $\mathcal{SK}_1$-approximation property nor the (classical) approximation property; and the $\mathcal{SK}_1$-approximation property implies neither the $\mathcal{K}_{u1}$-approximation property nor the approximation property. Here $\mathcal{SK}_p$ denotes the $p$-compact operators of Sinha and Karn for $p\ge 1$. We also show for all $2<p,q<\infty$ that there is a closed subspace $X\subset\ell^q$ that fails the $\mathcal{SK}_r$-approximation property for all $r\ge p$.
\end{abstract}

\maketitle

\section{Introduction}
Let $\I=(\I,\VI)$ be a Banach operator ideal. Following Delgado et al. \cite{DPS10_2} and Oja \cite{Oja12}, we say that the Banach space $X$ has the $\I$-approximation property ($\I$-AP in short) if the space $\F(Y,X)$ of all bounded finite-rank operators $Y\to X$ is $\VI$-dense in $\I(Y,X)$ for every Banach space $Y$. The $\I$-AP has recently been studied in several papers for various Banach operator ideals $\I$, see e.g. \cite{DPS10_2, Oja12, LT13, Kim19, Kim20, KLT21} and their references. Moreover, results about special instances of the $\I$-AP have been used in the recent studies \cite{TW22, Wir} of radical quotient algebras of Banach operator ideals.
\smallskip

The study of the $\K_{up}$-AP was initiated in \cite{Kim15} for the Banach operator ideal $\K_{up}$ of the unconditionally $p$-compact operators \cite{Kim14}, where $p\in[1,\infty)$ is a real number. It is known that if the Banach space $X$ has the classical approximation property (AP in short), then $X$ has the $\K_{up}$-AP for all $1\le p<\infty$. Moreover, due to results of J.M. Kim \cite{Kim15,Kim17_2} and Lassalle and Turco \cite{LT17}, it holds for all $1\le p<\infty$ that $X$ has the $\K_{up}$-AP whenever the dual space $X^*$ has the $\SK_{p}$-AP, and interchangeably, $X$ has the $\SK_p$-AP whenever $X^*$ has the $\K_{up}$-AP. Here $\SK_p$ denotes the Banach operator ideal of the Sinha-Karn $p$-compact operators introduced by Sinha and Karn \cite{SK02}. These duality results connect the $\K_{up}$-AP to the theory of the Sinha-Karn $p$-compact operators, which is a class of operators that has received a lot of attention over the past 20 years, see e.g. \cite{SK02,CK10,DPS10,PD11,GLT12,Pietsch14}. 
\smallskip

The main purpose of the present paper is to establish new results on the $\K_{up}$-AP in the case of $1\le p<2$. We approach the $\K_{up}$-AP mainly through the characterisation $\K_{up}=\K_{p'}^{sur}$ due to Mu\~{n}oz et al. \cite{MOP15} (also Fourie \cite{Fourie18}), where $\K_{p'}^{sur}$ denotes the surjective hull of the Banach operator ideal $\K_{p'}$ of the (classical) $p'$-compact operators, and $p'$ is the dual exponent of $p$. By considering $\K_{up}$ through this characterisation, many of the classical compact factorisation results related to $p$-compact operators become accessible in a natural way. This allows for new insights and also to recover known facts on the $\K_{up}$-AP. 
\smallskip

Our main results are established in Section 3. In particular, we establish the following monotonicity behaviour of the $\K_{up}$-AP in the range $[1,2)$: if the Banach space $X$ has the $\K_{up}$-AP, then $X$ has the $\K_{uq}$-AP whenever $1\le p<q<2$. Note that this monotonicity behaviour does not extend to the case of $1\le p< 2<q<\infty$, since in that case there are Banach spaces with the $\K_{up}$-AP that fail the $\K_{uq}$-AP (Example \ref{example}). Here we have omitted the trivial cases $p=2$ or $q=2$ since every Banach space has the $\K_{u2}$-AP. Moreover, by applying a factorisation result of Pisier \cite{Pisier80} and a result of John \cite{Jo} on the AP, we show that the Banach space $X$ has the $\K_{up}$-AP for all $1\le p<2$ whenever $X$ has cotype 2. 
\smallskip

As a consequence of our results, we show that there exist reflexive Banach spaces that have the $\K_{u1}$-AP but fail the $\SK_1$-AP. For such a space $X$, the dual $X^*$ has the $\SK_1$-AP but fails the $\K_{u1}$-AP. This answers the case $p=1$ of questions posed by J.M. Kim \cite[Problem 1]{Kim17_2} on the relationship between the AP, the $\SK_p$-AP and the $\K_{up}$-AP. Namely, the $\K_{u1}$-AP implies neither the $\SK_1$-AP nor the AP, and moreover, the $\SK_1$-AP implies neither the $\K_{u1}$-AP nor the AP. In addition, we show that there exist Banach spaces that have the $\mathcal W_1$-AP  but fail the AP, where $\mathcal W_1$ denotes the weakly 1-compact operators of Sinha and Karn \cite{SK02}. This answers a query of Kim posed in \cite{Kim20}.
\smallskip

In Section \ref{section4} we exhibit for all $2<p,q<\infty$ a closed subspace $X\subset\ell^q$ that fails the $\SK_r$-AP for all $r\ge p$. This is essentially the closed subspace $E\subset\ell^q$ constructed by Davie \cite{Da73,Da75} that fails the AP, and it complements a result of Y.S. Choi and J.M. Kim \cite{CK10} in which they established that a variant of the Davie space $E\subset\ell^q$ fails the $p$-approximation property whenever $p>2$ and $q>2p/(p-2)$. We also draw attention to some remaining questions related to the connections between the AP, the $\K_{up}$-AP and the $\SK_p$-AP for $2<p<\infty$.

\section{Banach operator ideals and approximation properties}\label{section2}

In this section we recall relevant definitions, notation and concepts related to Banach operator ideals and the corresponding approximation properties. Moreover, we establish a reverse monotonicity property for the classes $\K_{up}$ of unconditionally $p$-compact operators in the range $1\le p\le 2$, which will be used in Section 3. We also provide a characterisation of the $\K_{up}$-AP for dual Banach spaces which complements similar characterisations of the $\SK_p$-AP and the uniform $\SK_p$-AP obtained in \cite{DPS10_2} and \cite{DOPS09}, respectively.

Throughout the paper we consider Banach spaces over the same, either real or complex, scalar field $\mathbb K$. The closed unit ball of any Banach space $X$ is denoted by $B_X$ and the dual exponent of the real number $p\in[1,\infty)$ is denoted by $p'$ (i.e., $1/p+1/p'=1$).

\subsection{Banach operator ideals}
Let $\mathcal L(X,Y)$ be the space of all bounded linear operators $X\to Y$ for Banach spaces $X$ and $Y$. We say that $\I=(\I,\VI)$ is a \emph{Banach operator ideal} if $\I$ is a complete normed operator ideal in the sense of Pietsch \cite{Pietsch80}. Recall that the following conditions hold for all Banach spaces $X$ and $Y$:
\begin{enumerate}
\item[(B1)] The component $\I(X,Y)$ is a linear subspace of $\mathcal L(X,Y)$, and $\VI$ is a complete norm in $\I(X,Y)$ such that $||T||\le ||T||_\I$ for all $T\in\I(X,Y)$. 
\item[(B2)] The bounded rank-one operator $x^*\otimes y\in\mathcal I(X,Y)$ and \[||x^*\otimes y||_\I=||x^*||\cdot||y||\] 
for all $x^*\in X^*$ and $y\in Y$.
\item[(B3)] For all Banach spaces $E$ and $F$, and all operators $S\in\mathcal L(E,X)$, $T\in\I(X,Y)$ and $U\in\mathcal L(Y,F)$, one has $UTS\in \I(E,F)$. Moreover, \[||UTS||_\I\leq ||U||\cdot ||S||\cdot ||T||_\I.\]
\end{enumerate}
Above $x^*\otimes y: x\mapsto x^*(x)y$ and $\V$ is the uniform operator norm. The first part of (B3) will be referred to as the \emph{operator ideal property}. We refer to the monographs \cite{Pietsch80, DF93} and the survey \cite{DJP} for comprehensive sources on Banach operator ideals.
\smallskip

Following Pietsch \cite[6.7.1]{Pietsch80}, the inclusion $\I\subset\J$ of two Banach operator ideals $\I=(\I,\VI)$ and $\J=(\J,\V_{\J})$ means that $\I(X,Y)\subset \mathcal J(X,Y)$ for all Banach spaces $X$ and $Y$, and $||\cdot||_\J\le \VI$. Moreover, the identity $\I=\J$ means that $\I\subset\J$ and $\J\subset\I$, that is, $\I(X,Y)=\mathcal J(X,Y)$ for all Banach spaces $X$ and $Y$, and $\VI=||\cdot||_{\J}$.
\smallskip

We proceed by recalling relevant properties of the Sinha-Karn $p$-compact operators and the unconditionally $p$-compact operators. Towards the definitions of these classes of operators, let $p\in[1,\infty)$ be fixed and let $X$ and $Y$ be arbitrary Banach spaces. We denote by $\ell^p_w(X)$ the space of all weakly $p$-summable sequences in $X$, and by $\ell_s^p(X)$ that of all strongly $p$-summable sequences in $X$. Recall that $\ell^p_w(X)$ and $\ell^p_s(X)$ are Banach spaces when equipped with the following natural norms:
\begin{align*}&||(x_k)||_{p,w}=\sup_{x^*\in B_{X^*}}\big(\sum_{k=1}^\infty |x^*(x_k)|^p\big)^{1/p},\quad (x_k)\in\ell_w^p(X),\\
&||(x_k)||_p=\big(\sum_{k=1}^\infty||x_k||^p\big)^{1/p},\quad\qquad(x_k)\in\ell_s^p(X).\end{align*}
Following Sinha and Karn \cite{SK02}, the \emph{$p$-convex hull} of a weakly $p$-summable sequence $(x_k)\in\ell_w^p(X)$ is defined as
\[p\text{-co}\{x_k\}:=\big\{\sum_{k=1}^\infty\lambda_k x_k\mid (\lambda_k)\in B_{\ell_{p'}}\big\}\qquad\big(\text{if }p=1,\text{ then }(\lambda_k)\in B_{c_0}\big).\]
We say that the operator $T\in\mathcal L(X,Y)$ is a \emph{Sinha-Karn $p$-compact} operator, denoted $T\in\SK_p(X,Y)$, if there is a strongly $p$-summable sequence $(y_k)\in\ell_s^p(Y)$ such that 
\begin{equation}\label{231a}
T(B_X)\subset\, p\text{-co}\{y_k\}.\end{equation} 
According to \cite[Theorem 4.2]{SK02} and \cite[Proposition 3.15]{DPS10}, the class $\SK_p=(\SK_p,\V_{\SK_p})$ of the Sinha-Karn $p$-compact operators is a Banach operator ideal where the ideal norm $\V_{\SK_p}$ is defined by
\[||T||_{\SK_p}=\inf\big\{||(y_k)||_p\mid \eqref{231a}\text{ holds for }(y_k)\in\ell_s^p(Y)\big\}\]
for all $T\in\SK_p(X,Y)$. 

We point out that Sinha-Karn $p$-compact operators were introduced by Sinha and Karn \cite{SK02} as $p$-compact operators and with the notation $(K_p,\kappa_p)$ for the corresponding Banach operator ideal. However, we have decided to adopt the terminology and notation from  \cite{TW22, Wir} in order to obtain a clear distinction from the historically earlier class of (classical) $p$-compact operators \cite{FS79, Pietsch80}, which are denoted by $\K_p$ in this paper (see below).
\smallskip

For the definition of unconditionally $p$-compact operators, recall from e.g. \cite[8.2]{DF93} that a weakly $p$-summable sequence $(x_k)\in\ell_w^p(X)$ is called \emph{unconditionally $p$-summable}, denoted $(x_k)\in\ell_{u}^p(X)$, if 
\[\lim_{k\to\infty}||(0,\ldots,0,x_k,x_{k+1},\ldots)||_{p,w}=0.\]
Following J.M. Kim \cite{Kim14}, the operator $T\in\mathcal L(X,Y)$ is called \emph{unconditionally p-compact}, denoted $T\in\K_{up}(X,Y)$, if there is an unconditionally $p$-summable sequence $(y_k)\in\ell_u^p(Y)$ such that
\begin{equation}\label{f}
T(B_X)\subset \,p\text{-co}\{y_k\}.
\end{equation}
The class $\K_{up}=(\K_{up},||\cdot||_{\K_{up}})$ is a Banach operator ideal \cite[Theorem 2.1]{Kim14} where the ideal norm $\V_{\K_{up}}$ is defined by
\[||T||_{\K_{up}}=\inf\big\{||(y_k)||_{p,w}\mid (\ref{f})\text{ holds for }(y_k)\in\ell_u^p(Y)\big\}\]
for all $T\in\K_{up}(X,Y)$. 

It is known that $\K_{up}\subset\K$ for all $1\le p<\infty$, where $\K=(\K,\V)$ denotes the Banach operator ideal of the \emph{compact} operators with the uniform operator norm $\V$, see e.g. \cite[pp. 1574--1575]{AO15}. Moreover, clearly $\ell_s^p(Y)\subset\ell_u^p(Y)$ for any Banach space $Y$ and $||(y_k)||_{p,w}\le ||(y_k)||_p$ for any strongly $p$-summable sequence $(y_k)\in\ell_s^p(Y)$. Thus
\begin{equation}\label{30124}
\SK_p\subset\K_{up}\subset\K
\end{equation} 
for all $1\le p<\infty$. For the Sinha-Karn $p$-compact operators we have the following monotonicity property according to \cite[Proposition 4.3]{SK02} (see also \cite[p. 949]{Oja12}):
\begin{equation}\label{monotonicityofSKp}
\SK_p\subset\SK_q
\end{equation}
for all $1\le p<q<\infty$. We point out that the classes $\K_{up}$ are not monotone for $p\in[1,\infty)$, see Remark \ref{Remark191} below. 
\smallskip

In \cite[pp. 2885--2886]{MOP15} (see also \cite[Theorem 4.5]{Fourie18}) it was established that
\begin{equation}\label{KupisKpsur}
\K_{up}=\K_{p'}^{sur}
\end{equation}
for all $1\leq p<\infty$. Here $\K_{p'}^{sur}$ denotes the surjective hull of the $p'$-compact operators $\K_{p'}$. Our results on the $\K_{up}$-AP mostly draw on this characterisation and thus we next recall the definitions associated with the surjective hull $\K_p^{sur}$ for the convenience of the reader.
\smallskip
 
Let $1\le p\le \infty$. Following Fourie and Swart \cite{FS79} or Pietsch \cite[18.3]{Pietsch80}, the operator $T\in\mathcal L(X,Y)$ is called \emph{$p$-compact}, denoted $T\in\K_p(X,Y)$, if there are compact operators $A\in\K(X,\ell^p)$ and $B\in\K(\ell^p,Y)$ such that $T=BA$, where $\ell^p$ is replaced by $c_0$ in the case of $p=\infty$. The $p$-compact norm is defined by $||T||_{\K_p}=\inf||A||\,||B||$, where the infimum is taken over all such factorisations $T=BA$. According to \cite[Theorem 2.1]{FS79} or \cite[18.3.1]{Pietsch80}, the class $\K_{p}=(\K_p,\V_{\K_p})$ is a Banach operator ideal for all $1\le p\le \infty$.
\smallskip

The \emph{surjective hull} $\K_{p}^{sur}=(\K_p^{sur},\V_{\K_p^{sur}})$ of $\K_p$ is defined by the components
\[\K_p^{sur}(X,Y)=\{T\in\mathcal L(X,Y)\mid TQ\in\K_p(\ell^1(B_X),Y)\},\]
and the associated ideal norm is defined by $||T||_{\K_p^{sur}}=||TQ||_{\K_p}$ for all $T\in \K_p^{sur}(X,Y)$. Here $Q:\ell^1(B_X)\to X$ is the canonical metric surjection. 
\smallskip

Recall further that the \emph{injective hull} $\K_p^{inj}=(\K_p^{inj},\V_{\K_p^{inj}})$ of $\K_p$ is defined by the components 
\[\K_p^{inj}(X,Y)=\{T\in\mathcal L(X,Y)\mid j_0T\in\K_p(X,\ell^\infty(B_{Y^*}))\},\]
where $j_0:Y\to \ell^\infty(B_{Y^*})$ is the canonical isometric embedding. The associated ideal norm is defined by $||T||_{\K_p^{inj}}=||j_0T||_{\K_p}$ for all $T\in\K_p^{inj}(X,Y)$.
\smallskip

We proceed by revisiting a classical compact factorisation result due to Johnson \cite{Joh71}. The novelty here is a careful verification of the equality of the respective operator ideal norms, which is used in the proof of Proposition \ref{inclusionofKS} below. Note that Proposition \ref{proposition1} can also be deduced from \cite[Corollary 2.4]{FS79} for closed subspaces $Y\subset\ell^p$. However, we indicate a proof for closed subspaces $Y\subset L^p(\mu)$ for an arbitrary measure space $(\Omega,\Sigma,\mu)$, since we will require the case $L^p[0,1]$ in Proposition \ref{inclusionofKS} as well as the case $\ell^p$ later on.
\begin{proposition}\label{proposition1}
Suppose that $1\le p<\infty$ and let $(\Omega,\Sigma,\mu)$ be a measure space. Then for all Banach spaces $X$ and all closed subspaces $Y\subset L^p(\mu)$ the following hold: \[\K(X,Y)=\K_p^{inj}(X,Y)\] 
and $||T||=||T||_{\K_p^{inj}}$ for all compact operators $T\in\K(X,Y)$. 
\end{proposition}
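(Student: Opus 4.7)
The containment $\K_p^{inj}(X,Y)\subset\K(X,Y)$ with the norm estimate $||T||\le||T||_{\K_p^{inj}}$ holds automatically and for arbitrary $Y$: since $\K_p\subset\K$ with the operator norm dominated by the $\K_p$-norm, and since $j_0:Y\to\ell^\infty(B_{Y^*})$ is an isometric embedding (so that $||j_0T||=||T||$ and $j_0T$ is compact iff $T$ is), one has $||T||_{\K_p^{inj}}=||j_0T||_{\K_p}\ge||j_0T||=||T||$ for every $T\in\K_p^{inj}(X,Y)$. This says nothing more than that $\K$ is an injective Banach operator ideal.

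For the reverse direction, which is where the hypothesis $Y\subset L^p(\mu)$ enters, fix $T\in\K(X,Y)$ and let $i:Y\hookrightarrow L^p(\mu)$ denote the isometric inclusion. By the $1$-injectivity of $\ell^\infty(B_{Y^*})$, the canonical embedding $j_0:Y\to\ell^\infty(B_{Y^*})$ extends to a norm-one operator $\tilde j:L^p(\mu)\to\ell^\infty(B_{Y^*})$ with $j_0=\tilde j\circ i$. Therefore $j_0T=\tilde j\circ(iT)$ and the ideal property of $\K_p$ yields
\[
||T||_{\K_p^{inj}}=||j_0T||_{\K_p}\le||\tilde j||\cdot||iT||_{\K_p}\le||iT||_{\K_p}.
\]
Since $i$ is isometric, $||iT||=||T||$, so the matter reduces to the following $L^p(\mu)$-valued statement: every compact operator $S:X\to L^p(\mu)$ belongs to $\K_p(X,L^p(\mu))$ with $||S||_{\K_p}\le||S||$.

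This last step is the classical content of Johnson's compact factorisation theorem \cite{Joh71}: for every $\varepsilon>0$, such an $S$ admits a factorisation $S=BA$ with $A\in\K(X,\ell^p)$, $B\in\K(\ell^p,L^p(\mu))$ and $||A||\cdot||B||\le(1+\varepsilon)||S||$; taking the infimum over such factorisations gives $||S||_{\K_p}\le||S||$, and combined with the universal bound $||S||\le||S||_{\K_p}$ this forces equality of norms. The sharp norm control, which relies on the $L^p$-specific fact that compact subsets of $L^p(\mu)$ sit inside the closed absolutely convex hull of a suitably $\ell^p$-summable null sequence, is the main technical hurdle; rather than reprove Johnson's construction I would cite it. Note that when $Y\subset\ell^p$ the reference \cite[Corollary 2.4]{FS79} would suffice directly, but the $L^p(\mu)$-formulation is required for the application to $L^p[0,1]$ in Proposition \ref{inclusionofKS}.
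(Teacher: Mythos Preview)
Your argument is correct and follows the same overall route as the paper: reduce to showing that $\K(X,L^p(\mu))=\K_p(X,L^p(\mu))$ isometrically, and then pass to the closed subspace $Y$ via the injectivity machinery. Your use of the $1$-injectivity of $\ell^\infty(B_{Y^*})$ to get $||T||_{\K_p^{inj}}\le||iT||_{\K_p}$ is exactly the content of \cite[Proposition 8.4.4]{Pietsch80}, which the paper invokes by name.

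The one place where the paper does more work is precisely the step you black-box. The text explicitly flags ``a careful verification of the equality of the respective operator ideal norms'' as the point of the proposition, and it does not simply cite Johnson for $||S||_{\K_p}=||S||$. Instead it argues as follows: use the AP of $L^p(\mu)$ to pick finite-rank $F_n\to iT$ in operator norm; extract from the \emph{proof} of \cite[Theorem 2]{Joh71}, together with the fact that $L^p(\mu)$ is an $\mathcal L_{p,\lambda}$-space for every $\lambda>1$, that $||F_n||_{\K_p}\le\lambda||F_n||$ and hence $||F_n||_{\K_p}=||F_n||$; then use completeness of $\V_{\K_p}$ to conclude $||iT||_{\K_p}=||iT||$. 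So the paper treats the sharp norm control as something to be justified rather than quoted, and the mechanism is approximation by finite-rank operators (where Johnson's construction applies cleanly with constant $\lambda$) plus a limiting argument, not a direct appeal to a compact-operator statement with an asserted $(1+\varepsilon)$ bound. Your proof is fine as a sketch, but if the goal is the isometric identity you should supply this detail rather than defer it.
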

\begin{proof}
Let $T\in\K(X,Y)$, where $X$ is an arbitrary Banach space and $Y$ is a closed subspace of $L^p(\mu)$. Let $j:Y\to L^p(\mu)$ denote the inclusion map so that $jT\in\K(X,L^p(\mu))$. It is well known that $L^p(\mu)$ has the AP, and thus there is a sequence $(F_n)\subset\F(X,L^p(\mu))$ such that \[||F_n-jT||\to 0\] 
as $n\to\infty$, see \cite[Theorem 1.e.4]{LT77}. Moreover, $L^p(\mu)$ is an $\mathcal L_{p,\lambda}$-space for all $\lambda>1$ (see e.g. \cite[Theorem 3.2]{DJT95}), and thus the proof of \cite[Theorem 2]{Joh71} yields that $||F_n||_{\K_p}\le \lambda||F_n||$ for all $\lambda>1$ and all $n\in\mathbb N$. It follows that 
\[||F_n||_{\K_p}=||F_n||\quad(n\in\mathbb N)\]
by (B1).  Thus, by the completeness of the operator ideal norm $||\cdot||_{\K_p}$ and (B1), we have $jT\in\K_p(X,L^p(\mu))$ and 
\[||F_n-jT||_{\K_p}\to 0\] 
as $n\to\infty$. It follows that
\begin{equation}\label{192222}
||jT||_{\K_p}=\lim_{n\to\infty} ||F_n||_{\K_p}=\lim_{n\to\infty}||F_n||=||jT||=||T||.
\end{equation}
Finally, since $jT\in\K_p(X,L^p(\mu))$ we have $T\in\K_p^{inj}(X,Y)$ and $||T||_{\K_p^{inj}}\le ||jT||_{\K_p}$, see \cite[Proposition 8.4.4]{Pietsch80}. Thus, by applying \eqref{192222} and (B1) one gets that $||T||=||T||_{\K_p^{inj}}$.
\end{proof}
 
We next collect from \cite[Section 3]{Fourie18} (see also \cite[Section 2]{Fourie83}) the following pertinent characterisation of the injective and the surjective hull of the $p$-compact operators as well as the precise relationship between these two classes of operators for convenient reference. For part (iv) below recall that the \emph{dual ideal} $\I^{dual}=(\I^{dual},\V_{\I^{dual}})$ of the Banach operator ideal $\I=(\I,\VI)$ is defined by the components 
\[\I^{dual}(X,Y)=\{T\in\mathcal L(X,Y)\mid T^*\in\I(Y^*,X^*)\},\]
and the associated ideal norm is defined by $||T||_{\I^{dual}}=||T^*||_{\I}$ for all $T\in\I^{dual}(X,Y)$.

\begin{fact}\label{281}
Let $X$ and $Y$ be Banach spaces and let $T\in\mathcal L(X,Y)$. Suppose that $1\le p<\infty$ and $1<q\le \infty$. Here $\ell^q$ is replaced by $c_0$ in the case of $q=\infty$ in part (ii) and part (iii).
\begin{enumerate}[(i)]
\item By \cite[Proposition 3.4]{Fourie18} the following holds: $T\in\K_p^{inj}(X,Y)$ if and only if there is a closed subspace $M\subset\ell^p$ together with compact operators $A\in\K(X,M)$ and $B\in\K(M,Y)$ such that 
\[T=BA\quad\text{ and }\quad||T||_{\K_p^{inj}}=\inf ||A||\,||B||,\]
where the infimum is taken over all such factorisations $T=BA$. 

\item By \cite[Proposition 3.7]{Fourie18} the following holds: $T\in\K_{q}^{sur}(X,Y)$ if and only if there is a quotient space $Z$ of $\ell^q$ and compact operators $A\in\K(X,Z)$ and $B\in\K(Z,Y)$ such that 
\begin{equation}\label{192}
T=BA\quad\text{ and }\quad||T||_{\K_q^{sur}}=\inf ||A||\,||B||,
\end{equation}
where the infimum is taken over all such factorisations $T=BA$. 

\item By \cite[Proposition 3.7.(c)]{Fourie18} it suffices to assume that $A$ in \eqref{192} is a bounded operator. In particular, if $Z$ is quotient space of $\ell^q$ then (by choosing $A$ in \eqref{192} to be the identity operator $I:Z\to Z$) one gets that
\begin{equation}\label{1922}
\K(Z,Y)=\K_q^{sur}(Z,Y)\quad\text{ and }\quad ||T||=||T||_{\K_q^{sur}}
\end{equation}
for all $T\in\K(Z,Y)$.

\item By \cite[Theorem 3.13]{Fourie18} the following holds: 
\[\K_p^{inj}=(\K_{p'}^{sur})^{dual}\quad\text{ and }\quad\K_{p'}^{sur}=(\K_p^{inj})^{dual}.\]
\end{enumerate}
\end{fact}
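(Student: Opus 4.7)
The plan is to treat (i) and (iv) as the core parts and derive (ii) and (iii) from them by duality. For the forward direction of (i), suppose $T\in\K_p^{inj}(X,Y)$ and pick a factorisation $j_0T = B_0A_0$ with $A_0\in\K(X,\ell^p)$ and $B_0\in\K(\ell^p,\ell^\infty(B_{Y^*}))$ approximating $\|T\|_{\K_p^{inj}} = \|j_0T\|_{\K_p}$. Setting $M := \overline{A_0(X)}\subset\ell^p$ makes the co-restriction $A:X\to M$ of $A_0$ compact; since $B_0A_0(X) = j_0T(X)\subset j_0(Y)$ and $j_0(Y)$ is norm-closed, continuity forces $B_0(M)\subset j_0(Y)$, so $B := j_0^{-1}B_0|_M\in\K(M,Y)$ satisfies $T = BA$ with $\|A\|\|B\|\le\|A_0\|\|B_0\|$. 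For the reverse direction, given $T = BA$ with $A\in\K(X,M)$, $B\in\K(M,Y)$, and $M\subset\ell^p$ closed, Proposition \ref{proposition1} gives $A\in\K_p^{inj}(X,M)$ with $\|A\|_{\K_p^{inj}} = \|A\|$, and the ideal property (B3) yields $T\in\K_p^{inj}(X,Y)$ with $\|T\|_{\K_p^{inj}}\le\|B\|\|A\|$. Taking infima produces the equality of norms.

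For (iv), the key computation is the explicit duality between the canonical metric surjection $Q_{Y^*}:\ell^1(B_{Y^*})\to Y^*$ and the canonical isometric embedding $j_0:Y\to\ell^\infty(B_{Y^*})$, namely the relation $Q_{Y^*}^*\circ J_Y = j_0$ where $J_Y:Y\to Y^{**}$ is the canonical embedding. Combined with $(\ell^p)^*\cong\ell^{p'}$ and Schauder's theorem on the duality of compact operators, this relation allows one to translate a compact factorisation of $T^*Q_{Y^*}$ through $\ell^p$ into a compact factorisation of $j_0T$ through $\ell^{p'}$, and conversely, giving both $(\K_p^{sur})^{dual} = \K_{p'}^{inj}$ and (symmetrically) $(\K_p^{inj})^{dual} = \K_{p'}^{sur}$.

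With (i) and (iv) in hand, part (ii) follows by dualising: $T\in\K_q^{sur}(X,Y)$ iff $T^*\in\K_{q'}^{inj}(Y^*,X^*)$ with matching norms. Applying (i) to $T^*$ gives a compact factorisation through a closed subspace $N\subset\ell^{q'}$; dualising then produces a compact factorisation of $T^{**}$ through $N^*$, which is a quotient of $(\ell^{q'})^*=\ell^q$. An image/closedness argument (the relevant compact factor lands in the canonical image of $Y$ in $Y^{**}$, which is norm-closed) delivers the factorisation of $T$ itself, with matching norm. For (iii), given $T = BA$ with $A:X\to Z$ bounded, $B:Z\to Y$ compact, and $Z$ a quotient of $\ell^q$, dualise to get $T^* = A^*B^*$ with $B^*\in\K(Y^*,Z^*)$, $A^*$ bounded, and $Z^*\subset\ell^{q'}$ closed. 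Proposition \ref{proposition1} yields $B^*\in\K_{q'}^{inj}(Y^*,Z^*)$ with $\|B^*\|_{\K_{q'}^{inj}} = \|B\|$; the ideal property then gives $\|T^*\|_{\K_{q'}^{inj}}\le\|A\|\|B\|$, and (iv) delivers $\|T\|_{\K_q^{sur}}\le\|A\|\|B\|$. The ``in particular'' statement follows by taking $A = I_Z$ together with (B1).

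The main obstacle is the norm bookkeeping in (iv): although the relation $Q_{Y^*}^*\circ J_Y = j_0$ is an elementary calculation, extracting exact norm equalities and handling the extreme cases $p=1$ and $p=\infty$ (the latter under the convention $\K_\infty$ through $c_0$ with $c_0^*=\ell^1$) requires care. A secondary subtlety in the derivation of (ii) is the passage from a factorisation of $T^{**}$ through $N^*$ back to one of $T$, which is handled cleanly because the compact factors have images inside the canonical copy of $Y$; the quotient-of-$c_0$ identification in the $q=\infty$ case must likewise be read off through the convention in the statement.
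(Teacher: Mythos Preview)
The paper does not prove this statement: it is stated as a \emph{Fact} and each part is cited directly from \cite{Fourie18}, so there is no paper-side argument to compare against. Your treatments of (i) and (iii) are correct, and your sketch for (iv) is along the right lines (with the acknowledged care needed at $p=1$ for the passage between $c_0$ and $\ell^\infty$).

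There is, however, a genuine gap in your derivation of (ii) from (i) and (iv). After writing $T^*=BA$ with $A\in\K(Y^*,N)$, $B\in\K(N,X^*)$ and $N\subset\ell^{q'}$ closed, you assert that upon dualising ``the relevant compact factor lands in the canonical image of $Y$ in $Y^{**}$'', i.e.\ that $A^*(N^*)\subset J_Y(Y)$. This fails for an arbitrary factorisation coming from (i). Concretely, take $Y=c_0$, $X=\mathbb K$, $T(\lambda)=\lambda e_1$, $N=\mathbb K^2\subset\ell^{q'}$, define $A((a_n))=(\sum_n a_n,\,a_1)$ and $B(s,t)=t$; then $T^*=BA$ is a legitimate compact factorisation through $N$, yet $A^*(1,0)=(1,1,1,\ldots)\in\ell^\infty\setminus c_0$. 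The alternative of restricting to $Z:=\overline{B^*J_X(X)}\subset N^*$ does force $A^*(Z)\subset J_Y(Y)$, but then $Z$ is only a closed subspace of a quotient of $\ell^q$, which in general is not itself a quotient of $\ell^q$, so (ii) is not obtained this way either.

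The clean fix is to argue (ii) directly, dualising the \emph{proof} of (i) rather than its statement: for $T\in\K_q^{sur}(X,Y)$ write $TQ_X=BA$ with $A\in\K(\ell^1(B_X),\ell^q)$ and $B\in\K(\ell^q,Y)$, set $Z:=\ell^q/\ker B$ with quotient map $\pi$, observe that $\ker Q_X\subset\ker(TQ_X)=\ker(BA)=A^{-1}(\ker B)=\ker(\pi A)$ so that $\pi A$ descends to a compact $\widetilde A:X\to Z$, and factor $B=\widetilde B\pi$ with $\widetilde B\in\K(Z,Y)$; then $T=\widetilde B\widetilde A$ with $\|\widetilde A\|\,\|\widetilde B\|\le\|A\|\,\|B\|$.
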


We proceed by establishing the following reverse monotonicity property for the classes $\K_p^{inj}$ and $\K_{up}$ in the range $[1,2]$, which is used in the proof of Theorem \ref{theorem1}. Here the range cannot be extended to $[1,q]$ for any $q>2$ as we note in Remark \ref{Remark191} below.
\begin{proposition}\label{inclusionofKS}
Suppose that $1\leq p< q\leq 2$. Then the following hold:
\begin{enumerate}[(i)]
\item $\K_q^{inj}\subset\K_p^{inj}$ . 
\item $\K_{uq}\subset\K_{up}$ .
\end{enumerate}
\end{proposition}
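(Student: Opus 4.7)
\smallskip

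\noindent\textbf{Proof plan for Proposition \ref{inclusionofKS}.}

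For part (i), my plan is to combine the factorisation characterisation from Fact~\ref{281}(i) with the classical isometric embedding of $\ell^q$ into $L^p[0,1]$. Given $T\in\K_q^{inj}(X,Y)$ and $\varepsilon>0$, Fact~\ref{281}(i) produces a closed subspace $M\subset\ell^q$ and compact operators $A\in\K(X,M)$, $B\in\K(M,Y)$ with $T=BA$ and $\|A\|\,\|B\|\le (1+\varepsilon)\|T\|_{\K_q^{inj}}$. Since $1\le p<q\le 2$, the Bretagnolle--Dacunha-Castelle--Krivine theorem yields an isometric embedding $j:\ell^q\to L^p[0,1]$ (obtained from $q$-stable random variables). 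Let $N:=j(M)$, a closed subspace of $L^p[0,1]$, and let $j_M:M\to N$ denote the isometric isomorphism induced by $j$. Then $T=B'A'$ with $A':=j_M A\in\K(X,N)$ and $B':=B\circ j_M^{-1}\in\K(N,Y)$ satisfying $\|A'\|=\|A\|$ and $\|B'\|=\|B\|$.

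At this point Proposition~\ref{proposition1} applies to the closed subspace $N\subset L^p[0,1]$, giving $A'\in\K_p^{inj}(X,N)$ with $\|A'\|_{\K_p^{inj}}=\|A'\|$. The operator ideal property (B3) then yields $T=B'A'\in\K_p^{inj}(X,Y)$ with
\[
\|T\|_{\K_p^{inj}}\le \|B'\|\cdot \|A'\|_{\K_p^{inj}}=\|A\|\,\|B\|\le (1+\varepsilon)\|T\|_{\K_q^{inj}}.
\]
Letting $\varepsilon\to 0$ gives $\K_q^{inj}\subset\K_p^{inj}$ with the required norm inequality.

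For part (ii), I would deduce it from (i) by duality. By \eqref{KupisKpsur} applied to both $p$ and $q$, we have $\K_{up}=\K_{p'}^{sur}$ and $\K_{uq}=\K_{q'}^{sur}$, and Fact~\ref{281}(iv) identifies these with $(\K_p^{inj})^{dual}$ and $(\K_q^{inj})^{dual}$ respectively. Hence for $T\in\mathcal L(X,Y)$:
\[
T\in\K_{uq}(X,Y)\;\Longleftrightarrow\; T^*\in\K_q^{inj}(Y^*,X^*),
\]
and analogously for $\K_{up}$ and $\K_p^{inj}$, with the respective ideal norms being equal. Part (i), applied to $T^*$, then gives $T^*\in\K_p^{inj}(Y^*,X^*)$ with $\|T^*\|_{\K_p^{inj}}\le \|T^*\|_{\K_q^{inj}}$, which translates back to $T\in\K_{up}(X,Y)$ with $\|T\|_{\K_{up}}\le \|T\|_{\K_{uq}}$.

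The only nontrivial ingredient beyond the tools already assembled in Section~\ref{section2} is the isometric embedding $\ell^q\hookrightarrow L^p[0,1]$ for $1\le p\le q\le 2$; this is precisely the reason the proposition is restricted to the range $[1,2]$ and cannot be pushed further. Everything else is a formal combination of Fact~\ref{281} and Proposition~\ref{proposition1}.
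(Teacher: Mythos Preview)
Your proposal is correct and follows essentially the same route as the paper: factor through a closed subspace of $\ell^q$ via Fact~\ref{281}(i), use the isometric embedding $\ell^q\hookrightarrow L^p[0,1]$ to put that subspace inside an $L^p$-space, and invoke Proposition~\ref{proposition1} to upgrade the compact first factor to a $\K_p^{inj}$-operator with equal norm; part (ii) is then obtained by dualising via \eqref{KupisKpsur} and Fact~\ref{281}(iv). The only differences are cosmetic: you make the isometric identification $M\cong N\subset L^p[0,1]$ explicit (the paper applies Proposition~\ref{proposition1} directly to $M$ after noting the embedding exists), and you unwind the duality pointwise whereas the paper records it as the one-line implication $\I\subset\J\Rightarrow\I^{dual}\subset\J^{dual}$.
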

\begin{proof}
(i) Let $\varepsilon>0$ and suppose that $T\in \K_q^{inj}(X,Y)$, where $X$ and $Y$ are arbitrary Banach spaces. By applying Fact \ref{281}.(i), there exists a factorisation $T=VU$ through a closed subspace $M\subset\ell^q$, where $U\in\K(X,M)$ and $V\in\K(M,Y)$ are compact operators such that 
\begin{equation}\label{202021a}
||V||=1\quad\text{ and }\quad ||U||<||T||_{\K_q^{inj}}+\varepsilon.
\end{equation} 
Now, $\ell^q$ embeds isometrically into $L^p[0,1]$ (see e.g. \cite[Theorem 6.4.18]{AK06}), and thus there is an isometric embedding $j:M\to L^p[0,1]$. By Proposition \ref{proposition1} we have that $U\in\K_p^{inj}(X,M)$ and 
\begin{equation}\label{20201}
||U||=||U||_{\K_p^{inj}}.
\end{equation} 
Thus $T=VU\in\K_p^{inj}(X,Y)$ by the operator ideal property. Moreover, by applying (B3) and the norm estimates in \eqref{202021a} and \eqref{20201} one gets that
\[||T||_{\K_p^{inj}}=||VU||_{\K_p^{inj}}\le ||V||\,||U||_{\K_p^{inj}}=||U||<||T||_{\K_q^{inj}}+\varepsilon.\]
Consequently, $||T||_{\K_p^{inj}}\le ||T||_{\K_q^{inj}}$. This yields the claim in part (i).
\medskip

(ii) Clearly $\I\subset\J$ implies $\I^{dual}\subset\J^{dual}$ for any Banach operator ideals $\I=(\I,\VI)$ and $\J=(\J,\V_\J)$. Thus the following identities hold by \eqref{KupisKpsur}, Fact \ref{281}.(iv) and part (i):
\[\K_{uq}=\K_{q'}^{sur}=(\K_q^{inj})^{dual}\subset (\K_p^{inj})^{dual}=\K_{p'}^{sur}=\K_{up}.\]
This completes the proof.
\end{proof}
\begin{remark}\label{Remark191}
The reverse monotonicity behaviour of the classes $\K_p^{inj}$ and $\K_{up}$ in Proposition \ref{inclusionofKS} does not extend to the case of $1\le p< 2<q<\infty$. In fact, in that case there are, according to \cite[Proposition 3.6]{TW22}, closed subspaces $X\subset\ell^p$ and $Y\subset\ell^q$ for which $\mathcal{K}_p^{inj}(X\oplus Y)$ and $\mathcal{K}_q^{inj}(X\oplus Y)$ are incomparable classes of operators. Consequently, also $\K_{up}(X^*\oplus Y^*)$ and $\K_{uq}(X^*\oplus Y^*)$ are incomparable classes by \eqref{KupisKpsur} and the first identity in Fact \ref{281}.(iv). Note that the injective hull $\K_p^{inj}$ is denoted by $\mathcal{QK}_p$ in \cite{TW22}, see the discussion after \cite[Remarks 3.2]{TW22}.
\end{remark}

Let $\I=(\I,\VI)$ and $\J=(\J,\V_\J)$ be Banach operator ideals. Recall that the quasi-Banach operator ideal $\J\circ \I=(\J\circ\I,\V_{\J\circ \I})$ is defined as follows: the operator $T\in\J\circ\I(X,Y)$ by definition if there is a Banach space $Z$ and operators $A\in\I(X,Z)$ and $B\in\J(Z,Y)$ such that $T=BA$.
The components $\J\circ\I(X,Y)$ are equipped with the quasi-norm 
\[||T||_{\J\circ\I}=\inf ||A||_{\I}\,||B||_{\J},\] 
where the infimum is taken over all such factorisations $T=BA$. We refer to \cite[6.1.1]{Pietsch80} or \cite[9.2]{DF93} for the definition of a quasi-norm. 
\smallskip

We will require the following useful characterisation of the unconditionally $p$-compact operators obtained by J.M. Kim \cite[Theorem 2.2]{Kim17}:
\begin{equation}\label{KupisKupKup} 
\K_{up}=\K_{up}\circ\K_{up}\quad\text{ for all }1\le p<\infty.
\end{equation}
\begin{remark}
The argument in \cite{Kim17} uses a technical lemma on certain collections of summable sequences of positive real numbers. We note in passing that one can also establish the isometric identity in \eqref{KupisKupKup} by applying the characterisation $\K_{up}=\K_{p'}^{sur}$ from \eqref{KupisKpsur} and using the identities in \eqref{192} and \eqref{1922} above. We leave the details to the reader.
\end{remark}

\subsection{Approximation properties}
Suppose that $\I=(\I,\VI)$ is an arbitrary Banach operator ideal and let $X$ be a Banach space. Recall that $X$ is said to have the \emph{$\mathcal I$-approximation property} ($\mathcal I$-AP) if 
\[
\I(Y,X)=\overline{\F(Y,X)}^{\V_\mathcal I}
\]
for every Banach space $Y$. Recall further that $X$ has the \emph{approximation property} (AP) if for all $\varepsilon>0$ and all compact subsets $K\subset X$ there is a bounded finite-rank operator $U\in\F(X)$ such that
\[\sup_{x\in K}||Ux-x||<\varepsilon.\]
A classical characterisation of Grothendieck (see e.g. \cite[Theorem 1.e.4]{LT77}) states that $X$ has the AP if and only if $\K(Y,X)=\A(Y,X)$ for every Banach space $Y$. Here $\A=(\A,\V)$ denotes the Banach operator ideal of the \emph{approximable} operators, which is defined by the components $\A(X,Y)=\overline{\F(X,Y)}^{\V}$ and equipped with the uniform operator norm $\V$. Consequently, $X$ has the AP if and only if $X$ has the $\K$-AP. 

Following the terminology in \cite{TW22}, we say that the Banach space $X$ has the \emph{uniform $\I$-approximation property} (uniform $\I$-AP) if 
\[
\I(Y,X)\subset\A(Y,X)
\] 
for every Banach space $Y$. The uniform $\I$-AP was considered by Lassalle and Turco \cite{LT13} with a slightly different terminology. Observe that if $X$ has the $\I$-AP, then $X$ has the uniform $\I$-AP, since $||\cdot||\le ||\cdot||_{\I}$ by (B1). However, in general the converse fails, see \cite[p. 2460]{LT13}.
\smallskip

We proceed with a simple lemma which exhibits some connections between the approximation properties presented above in special situations that are relevant for us.
\begin{lemma}\label{2876}
Suppose that $\I=(\I,\VI)$ and $\J=(\J,\V_{\J})$ are Banach operator ideals and let $X$ be a Banach space. 
\begin{enumerate}[(i)] 
\item Suppose that $\I(Y,X)=\J\circ\I(Y,X)$ for every Banach space $Y$. If $X$ has the uniform $\J$-AP, then $X$ has the $\I$-AP.

\item Suppose that $\I(Y,X)=\K\circ\I(Y,X)$ for every Banach space $Y$. If $X$ has the AP, then $X$ has the $\I$-AP.
\end{enumerate}
\end{lemma}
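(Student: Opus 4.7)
The plan is to exploit the factorisation hypothesis to reduce $\mathcal I$-norm approximation to uniform-norm approximation, and then use property (B3) to push the estimate back into $\VI$.

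For part (i), I would fix an arbitrary Banach space $Y$ and an operator $T\in\I(Y,X)$. By the hypothesis $\I(Y,X)=\J\circ\I(Y,X)$, there exists an intermediate Banach space $Z$ together with $A\in\I(Y,Z)$ and $B\in\J(Z,X)$ such that $T=BA$. Since $X$ has the uniform $\J$-AP, we have $B\in\J(Z,X)\subset\A(Z,X)$, so there is a sequence $(F_n)\subset\F(Z,X)$ with $\|F_n-B\|\to 0$ as $n\to\infty$. Then each $F_nA\in\F(Y,X)$ by the ideal property, and by (B3)
\[
\|T-F_nA\|_{\I}=\|(B-F_n)A\|_{\I}\le\|B-F_n\|\cdot\|A\|_{\I}\xrightarrow[n\to\infty]{}0.
\]
This shows $T\in\overline{\F(Y,X)}^{\VI}$, and since $Y$ was arbitrary, $X$ has the $\I$-AP.

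For part (ii), I would simply invoke Grothendieck's classical characterisation of the AP recalled right before the lemma: $X$ has the AP if and only if $\K(Y,X)=\A(Y,X)$ for every Banach space $Y$. Since $\A(Y,X)\subset\K(Y,X)$ always holds, this equality is precisely the statement that $X$ has the uniform $\K$-AP. Now apply part (i) with $\J=\K$.

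I do not anticipate any real obstacle: the content is essentially the two-line computation $\|T-F_nA\|_{\I}\le\|B-F_n\|\,\|A\|_\I$, which is the whole point of the ideal inequality in (B3). The only conceptual step is observing that AP $=$ uniform $\K$-AP, which makes part (ii) a direct corollary of part (i).
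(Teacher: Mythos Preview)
Your proof is correct and follows essentially the same route as the paper: factor $T=BA$, approximate $B$ in operator norm using the uniform $\J$-AP (respectively the AP via Grothendieck), and push the estimate through with (B3). The only cosmetic difference is that you use a sequence $(F_n)$ where the paper uses an $\varepsilon$-argument.
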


\begin{proof}
(i) Suppose that $X$ has the uniform $\J$-AP. Let $\varepsilon>0$ and suppose that $T\in\I(Y,X)$ for an arbitrary Banach space $Y$. By assumption, $T=BA$ for compatible operators $A\in \I(Y,Z)$ and $B\in\J(Z,X)$. Since $X$ has the uniform $\J$-AP, there is a bounded finite-rank operator $F\in\F(Z,X)$ such that $||B-F||<\varepsilon/||A||_\I$. It follows that
\[||T-FA||_\I=||BA-FA||_\I\le||B-F||\,||A||_\I<\varepsilon\]
by (B3). Consequently, $T\in \overline{\F(Y,X)}^{\VI}$ which concludes the proof.
\smallskip

(ii) Suppose that $X$ has the AP. Grothendieck's result \cite[Theorem 1.e.4]{LT77} implies that $X$ has the uniform $\K$-AP. Thus part (i) (with $\J=\K$) yields that $X$ has the $\I$-AP.
\end{proof}

Suppose that $1\le p<\infty$ and let $X$ be a Banach space. It is known that if  $X$ has the AP, then $X$ has the $\SK_p$-AP, see \cite[Proposition 3.10]{GLT12}. Moreover, using an internal  characterisation of the $\K_{up}$-AP that involves a locally convex topology on $\mathcal L(X)$, it was established in \cite[Section 2]{Kim15} that if $X$ has the AP, then $X$ has the $\K_{up}$-AP.  We next observe that this can also be verified by applying Lemma \ref{2876}.(ii) and \eqref{KupisKupKup}. Here the method is essentially the same as in the proof of \cite[Proposition 3.10]{GLT12} and for convenience we also provide the proof for the case of the $\SK_p$-AP. 

\begin{fact}\label{facts2612}  
Suppose that $X$ is a Banach space with the AP. Then $X$ has both the $\SK_p$-AP and the $\K_{up}$-AP for all $1\le p<\infty$.
\end{fact}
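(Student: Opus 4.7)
The plan is to treat the two assertions separately, both via Lemma~\ref{2876}.

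For the $\K_{up}$-AP, I would apply Lemma~\ref{2876}.(ii) with $\I=\K_{up}$. Combining the identity $\K_{up}=\K_{up}\circ\K_{up}$ from \eqref{KupisKupKup}, the inclusion $\K_{up}\subset\K$ from \eqref{30124}, and the operator ideal property (B3), we obtain
\[
\K_{up}(Y,X)=\K_{up}\circ\K_{up}(Y,X)\subset\K\circ\K_{up}(Y,X)\subset\K_{up}(Y,X)
\]
for every Banach space $Y$; hence $\K_{up}(Y,X)=\K\circ\K_{up}(Y,X)$, and Lemma~\ref{2876}.(ii) immediately delivers the $\K_{up}$-AP of $X$.

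For the $\SK_p$-AP, the plan is to argue directly along the lines of \cite[Proposition~3.10]{GLT12}. Given $T\in\SK_p(Y,X)$ and $\varepsilon>0$, choose a representing sequence $(y_k)\in\ell_s^p(X)$ with $T(B_Y)\subset p\text{-co}\{y_k\}$ and $\|(y_k)\|_p\le(1+\varepsilon)\|T\|_{\SK_p}$. Since $\sum_k\|y_k\|^p<\infty$, the set $K:=\{y_k\}_{k\ge1}\cup\{0\}$ is compact in $X$, and the AP of $X$ furnishes $F_n\in\F(X)$ with $\sup_{x\in K}\|F_nx-x\|\to 0$. For the finite-rank approximants $F_nT\in\F(Y,X)$ the inclusion $(T-F_nT)(B_Y)\subset p\text{-co}\{(I-F_n)y_k\}$ yields
\[
\|T-F_nT\|_{\SK_p}\le\Bigl(\sum_{k=1}^\infty\|(I-F_n)y_k\|^p\Bigr)^{1/p},
\]
and the task reduces to showing that the right-hand side tends to $0$ as $n\to\infty$.

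The main obstacle is precisely this $\ell^p$-summability step. The AP provides only a uniform-in-$k$ bound $\|(I-F_n)y_k\|\le\eta_n\to 0$, which alone is insufficient since $\sum_k\eta_n^p=\infty$. The standard resolution, as in \cite[Proposition~3.10]{GLT12}, is a truncation at a large index $N$ chosen so that $\sum_{k>N}\|y_k\|^p<\varepsilon^p$: the head $\sum_{k\le N}\|(I-F_n)y_k\|^p\le N\eta_n^p$ vanishes as $n\to\infty$, while for the tail one exploits the summability of $(\|y_k\|^p)$ together with a uniform-in-$n$ bound for $(F_n)$ on the separable subspace $\overline{\operatorname{span}}\{y_k\}$---obtained from the Banach--Steinhaus theorem applied to the pointwise-convergent family $F_n\to I$ on that subspace---to dominate the tail by a constant multiple of $\sum_{k>N}\|y_k\|^p$. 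Combining these two estimates forces the full $\ell^p$-sum to $0$.
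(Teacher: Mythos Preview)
Your treatment of the $\K_{up}$-AP is correct and coincides with the paper's proof: both deduce $\K_{up}(Y,X)=\K\circ\K_{up}(Y,X)$ from \eqref{KupisKupKup} and \eqref{30124}, and then invoke Lemma~\ref{2876}.(ii).

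For the $\SK_p$-AP, however, your argument has a genuine gap at the Banach--Steinhaus step. From the AP you only obtain $F_n\in\F(X)$ with $\|F_n y_k - y_k\|\to 0$ for each $k$, hence $F_n x\to x$ for every $x$ in the \emph{algebraic} span $\operatorname{span}\{y_k\}$. You cannot conclude pointwise convergence (or even pointwise boundedness) on the closure $\overline{\operatorname{span}}\{y_k\}$: passing from a dense subspace to its closure already requires a uniform bound on $\|F_n|_Z\|$, which is precisely what you are trying to extract. The uniform boundedness principle is therefore not applicable here, since $\operatorname{span}\{y_k\}$ is not complete and pointwise boundedness on $Z=\overline{\operatorname{span}}\{y_k\}$ has not been established. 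In general the AP provides no control whatsoever on $\|F_n\|$ (this is exactly the distinction between the AP and the bounded AP), so the tail estimate $\sum_{k>N}\|(I-F_n)y_k\|^p\le (1+\sup_n\|F_n\|)^p\sum_{k>N}\|y_k\|^p$ is unavailable.

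The paper avoids this issue entirely by using the factorisation $\SK_p(Y,X)=\K\circ\SK_p(Y,X)$ from \cite[Theorem~3.1]{CK10} and then Lemma~\ref{2876}.(ii), exactly as in the $\K_{up}$ case. Concretely, one writes $T=BA$ with $A\in\SK_p$ and $B\in\K$; since $X$ has the AP, Grothendieck's criterion gives $F_n\in\F$ with $\|B-F_n\|\to 0$, and then $\|T-F_nA\|_{\SK_p}\le\|B-F_n\|\,\|A\|_{\SK_p}\to 0$. Your direct approach can be repaired, but not via Banach--Steinhaus: the standard device is to factor $y_k=\gamma_k z_k$ with $(\gamma_k)\in\ell^p$ and $\|z_k\|\to 0$ (possible whenever $(\|y_k\|)\in\ell^p$), apply the AP on the compact set $\{z_k\}\cup\{0\}$, and then estimate $\sum_k\|(I-F)y_k\|^p\le\sup_k\|(I-F)z_k\|^p\sum_k|\gamma_k|^p$ directly, with no tail splitting and no need for a bound on $\|F\|$. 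This is in fact the content behind \cite[Theorem~3.1]{CK10}.
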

\begin{proof}
Let $1\le p<\infty$ and let $Y$ be an arbitrary Banach space. It follows from \cite[Theorem 3.1]{CK10} and the operator ideal property that $\SK_p(Y,X)=\K\circ\SK_p(Y,X)$. Moreover, since $\K_{up}\subset\K$ (see \eqref{30124}), the factorisation result \eqref{KupisKupKup} of Kim yields that $\K_{up}(Y,X)=\K\circ\K_{up}(Y,X)$. Thus both claims follow from Lemma \ref{2876}.(ii). 
\end{proof}

\begin{remark}\label{remarkSK2}
Every Banach space has the $\SK_2$-AP by \cite[Corollary 3.6]{DPS10_2} (see also Oja \cite[p. 952]{Oja12}) and the $\K_{u2}$-AP by \cite[Corollary 1.2]{Kim15}. The argument for the $\K_{u2}$-AP in \cite{Kim15} relies on a duality result between the $\SK_p$-AP and the $\K_{up}$-AP, see \eqref{eq:SKpimpliesKup} below. We note that a similar argument to the one suggested by Oja in \cite{Oja12} for the $\SK_2$-AP also yields the case of the $\K_{u2}$-AP. In fact, by applying the isometric identity $\K_{u2}=\K_{2}^{sur}$ from \eqref{KupisKpsur}, one verifies that $\K_{u2}=\K_2$, see \cite[18.3 and 18.1.8]{Pietsch80} or \cite[Remark 4.2]{MOP15}. Moreover, it is known that $\K_2(Y,X)=\overline{\F(Y,X)}^{\V_{\K_2}}$ for all Banach spaces $Y$ and $X$, which follows from e.g. \cite[18.1.4]{Pietsch80}. 
\end{remark}
It follows from \cite[Theorem 2.1]{DOPS09} that the uniform $\SK_p$-AP coincides with the $p$-approximation property of Sinha and Karn introduced in \cite{SK02}, which is a strictly weaker property than the $\SK_p$-AP at least for $1\le p<2$, see \cite[Theorem 2.4]{DPS10_2}. We next note that the identity $\K_{up}=\K_{up}\circ\K_{up}$ of J.M. Kim in \eqref{KupisKupKup} implies that $\K_{up}$-AP is equivalent to the uniform $\K_{up}$-AP for all $1\le p<\infty$. This also follows by combining \eqref{KupisKpsur} and \cite[Proposition 4.4]{Kim19}, but we provide for convenience a short proof by applying Lemma \ref{2876}.(i).

\begin{proposition}\label{KupAPisuniform} Suppose that $1\le p<\infty$ and let $X$ be a Banach space. Then $X$ has the $\K_{up}$-AP if and only if $X$ has the uniform $\K_{up}$-AP.
\end{proposition}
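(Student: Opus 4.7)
The forward implication is immediate from property (B1): since $\|\cdot\|\le\|\cdot\|_{\K_{up}}$, any sequence of finite-rank operators converging in the $\K_{up}$-norm also converges uniformly, so $\K_{up}(Y,X)=\overline{\F(Y,X)}^{\V_{\K_{up}}}$ yields $\K_{up}(Y,X)\subset\A(Y,X)$ for every Banach space $Y$. I would dispose of this direction in one sentence at the start of the proof.

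For the nontrivial direction, the plan is to apply Lemma \ref{2876}.(i) with $\I=\J=\K_{up}$. The factorisation identity \eqref{KupisKupKup} of J.M. Kim states exactly that $\K_{up}=\K_{up}\circ\K_{up}$ isometrically as Banach operator ideals, so in particular the components satisfy $\K_{up}(Y,X)=\K_{up}\circ\K_{up}(Y,X)$ for every Banach space $Y$. Thus the hypothesis of Lemma \ref{2876}.(i) is verified, and assuming that $X$ has the uniform $\K_{up}$-AP, the lemma delivers that $X$ has the $\K_{up}$-AP.

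In more detail, unfolding the proof of Lemma \ref{2876}.(i) in this specific setting: given $T\in\K_{up}(Y,X)$ and $\varepsilon>0$, write $T=BA$ with $A\in\K_{up}(Y,Z)$ and $B\in\K_{up}(Z,X)$. Since $\K_{up}\subset\K$ by \eqref{30124}, $B$ is compact, so the uniform $\K_{up}$-AP of $X$ furnishes $F\in\F(Z,X)$ with $\|B-F\|<\varepsilon/(1+\|A\|_{\K_{up}})$. Then by (B3),
\[\|T-FA\|_{\K_{up}}=\|(B-F)A\|_{\K_{up}}\le\|B-F\|\,\|A\|_{\K_{up}}<\varepsilon,\]
and $FA\in\F(Y,X)$. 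Hence $T\in\overline{\F(Y,X)}^{\V_{\K_{up}}}$, establishing the $\K_{up}$-AP.

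There is essentially no obstacle here: all of the heavy lifting has been done in advance, namely Kim's factorisation identity \eqref{KupisKupKup} and the general Lemma \ref{2876}.(i). The only mild subtlety is to make sure one uses the uniform $\K_{up}$-AP (which allows approximation of the compact factor $B$ in operator norm) rather than trying to approximate $T$ directly; this is precisely the benefit of having a nontrivial factorisation of the ideal through itself.
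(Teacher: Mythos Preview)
Your proof is correct and follows exactly the paper's approach: the forward implication is immediate from (B1), and the converse is Lemma \ref{2876}.(i) applied with $\I=\J=\K_{up}$ via Kim's factorisation identity \eqref{KupisKupKup}. One minor remark: in your unfolding, the relevant fact is that $B\in\K_{up}(Z,X)$ (so the uniform $\K_{up}$-AP gives $B\in\A(Z,X)$), not that $B$ is compact---the compactness observation is unnecessary.
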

\begin{proof}
The forward implication clearly holds. The converse implication follows from Lemma \ref{2876}.(i) since $\K_{up}=\K_{up}\circ\K_{up}$ by \eqref{KupisKupKup}. 
\end{proof}

We conclude this section with a characterisation of the $\K_{up}$-AP for dual Banach spaces, which complements the characterisations of the $\SK_p$-AP and the uniform $\SK_p$-AP for dual spaces obtained in \cite[Theorem 2.3]{DPS10_2} and \cite[Theorem 2.8]{DOPS09}, respectively.
\begin{theorem}\label{28761}
Suppose that $1\le p<\infty$ and let $X$ be a Banach space. Then  the following are equivalent:
\begin{enumerate}
\item[(i)] $X^*$ has the $\K_{up}$-AP.
\item[(ii)] $\K(Z,X^*)=\overline{\F(Z,X^*)}^{\V_{\K_{up}}}$ for every quotient space $Z$ of $\ell^{p'}$ (respectively, of $c_0$ if $p=1$).
\item[(iii)] $\K(Z,X^*)=\A(Z,X^*)$ for every quotient space $Z$ of $\ell^{p'}$ (respectively, of $c_0$ if $p=1$).
\item[(iv)] $\K(X,M)=\A(X,M)$ for every closed subspace $M\subset\ell^p$.
\end{enumerate}
\end{theorem}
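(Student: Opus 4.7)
The plan is to treat (i) $\Leftrightarrow$ (ii) $\Leftrightarrow$ (iii) together using the isometric identifications of Fact~\ref{281}, and then to obtain (iii) $\Leftrightarrow$ (iv) by transposition combined with the adjoint-duality $\K_p^{inj} = (\K_{p'}^{sur})^{dual}$. For the first block, (i) $\Rightarrow$ (ii) is immediate since each such quotient $Z$ is a Banach space. Conversely, given $T \in \K_{up}(Y, X^*) = \K_{p'}^{sur}(Y, X^*)$, Fact~\ref{281}.(ii) provides a factorisation $T = BA$ with $A \in \K(Y, Z)$ and $B \in \K(Z, X^*)$ for some quotient $Z$ of $\ell^{p'}$ (respectively $c_0$ if $p = 1$); if $F_n \in \F(Z, X^*)$ with $\|B - F_n\|_{\K_{up}} \to 0$, then (B3) yields $\|T - F_n A\|_{\K_{up}} \le \|B - F_n\|_{\K_{up}} \|A\| \to 0$, giving (ii) $\Rightarrow$ (i). The equivalence (ii) $\Leftrightarrow$ (iii) is immediate from Fact~\ref{281}.(iii), which asserts $\V = \V_{\K_{up}}$ on $\K(Z, X^*)$ for such $Z$.

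For (iv) $\Rightarrow$ (iii): given $T \in \K(Z, X^*)$ with $Z$ a quotient of $\ell^{p'}$ (respectively $c_0$ if $p=1$), observe that $Z^*$ embeds isometrically as a closed subspace of $\ell^p$. The composition $U := T^* \circ \kappa_X : X \to Z^*$, where $\kappa_X$ is the canonical embedding into $X^{**}$, is compact, so (iv) applied with $M = Z^*$ provides $F_n \in \F(X, Z^*)$ with $\|F_n - U\|_{L(X, Z^*)} \to 0$. Transposing via $\langle \widetilde F_n(z), x\rangle := \langle F_n(x), z\rangle$ yields $\widetilde F_n \in \F(Z, X^*)$ satisfying $\|T - \widetilde F_n\|_{L(Z, X^*)} \le \|U - F_n\|_{L(X, Z^*)} \to 0$, as one verifies directly.

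For (iii) $\Rightarrow$ (iv): Proposition~\ref{proposition1} gives $T \in \K(X, M) = \K_p^{inj}(X, M)$ with equal norms, so by Fact~\ref{281}.(iv), $T^* \in \K_{p'}^{sur}(M^*, X^*) = \K_{up}(M^*, X^*)$. Since (iii) is equivalent to (i), $X^*$ has the $\K_{up}$-AP, so $T^* = \lim_n G_n$ in $\V_{\K_{up}}$ (hence in $\V$) for some $G_n \in \F(M^*, X^*)$. Transposing $G_n = \sum_i \xi_{n,i} \otimes x_{n,i}^*$ produces finite-rank maps $\widetilde{G_n} : X \to M^{**}$, $x \mapsto \sum_i x_{n,i}^*(x)\xi_{n,i}$, with $\|\kappa_M T - \widetilde{G_n}\|_{L(X, M^{**})} \to 0$, where $\kappa_M : M \to M^{**}$ is the canonical embedding. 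For $1 < p < \infty$ reflexivity of $M$ identifies $M^{**}$ with $M$, and $\kappa_M^{-1}\widetilde{G_n} \in \F(X, M)$ approximates $T$ uniformly. The main obstacle is the case $p = 1$, where $M \subset \ell^1$ need not be reflexive or weak*-closed; here one invokes the principle of local reflexivity applied to a finite-dimensional subspace $E_n \subset M^{**}$ containing $\widetilde{G_n}(X)$ together with $\kappa_M(E)$, where $E \subset M$ is the linear span of a finite $\varepsilon$-net of the relatively compact set $T(B_X)$, to obtain $V_n : E_n \to M$ of norm close to $1$ with $V_n \kappa_M(m) = m$ for $m \in E$, so that $V_n \widetilde{G_n} \in \F(X, M)$ approximates $T$ to within $O(\varepsilon)$ in uniform norm.
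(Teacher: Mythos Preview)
Your proof is correct, though the phrasing for (i)$\Rightarrow$(ii) (``immediate since each such quotient $Z$ is a Banach space'') obscures the essential point: the $\K_{up}$-AP gives the $\V_{\K_{up}}$-closure of $\F$ in $\K_{up}(Z,X^*)$, not in $\K(Z,X^*)$, so you must invoke the identification $\K(Z,X^*)=\K_{up}(Z,X^*)$ from Fact~\ref{281}.(iii) already here, not only for (ii)$\Leftrightarrow$(iii).

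Your route differs from the paper's in structure. The paper argues in a cycle (i)$\Rightarrow$(ii)$\Rightarrow$(iii)$\Rightarrow$(iv)$\Rightarrow$(i), closing the loop via Proposition~\ref{KupAPisuniform} (the uniform $\K_{up}$-AP coincides with the $\K_{up}$-AP). You instead establish (i)$\Leftrightarrow$(ii)$\Leftrightarrow$(iii) directly and then handle (iii)$\Leftrightarrow$(iv) by transposition; in particular your (ii)$\Rightarrow$(i) via the factorisation $T=BA$ bypasses Proposition~\ref{KupAPisuniform} altogether. Your (iv)$\Rightarrow$(iii), exploiting that $Z^*$ sits isometrically as a closed subspace of $\ell^p$, is a clean direct argument with no counterpart in the paper's cycle. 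For (iii)$\Rightarrow$(iv) the paper factors $T^*$ through a quotient of $\ell^{p'}$ (Fact~\ref{281}.(ii)), applies (iii) to the second factor to obtain $T^*\in\A(M^*,X^*)$, and then cites once and for all the standard local-reflexivity consequence that $T^*\in\A$ implies $T\in\A$ \cite[Theorem 11.7.4]{Pietsch80}; this is uniform in $p$. Your version instead approximates $T^*$ via (i), transposes into $M^{**}$, and then splits into the reflexive case $1<p<\infty$ and an explicit local-reflexivity argument for $p=1$. Both work, but the paper's single citation is tidier and avoids the case distinction.
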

\begin{proof}
(i)$\Rightarrow$(ii) Suppose that $Z$ is quotient space of $\ell^{p'}$ (respectively, of $c_0$ if $p=1$) and let $T\in\K(Z,X^*)$. By Fact \ref{281}.(iii) and \eqref{KupisKpsur}, we have $T\in\K_{up}(Z,X^*)$. It follows by the assumption that $T\in\overline{\F(Z,X^*)}^{\V_{\K_{up}}}$.
\smallskip

(ii)$\Rightarrow$(iii) This is obvious since $||\cdot||\le||\cdot||_{\K_{up}}$ by (B1).
\smallskip

(iii)$\Rightarrow$(iv) Suppose that $T\in \K(X,M)$, where $M$ is an arbitrary closed subspace of $\ell^p$. By Proposition \ref{proposition1} we have $T\in\K_p^{inj}(X,M)$, and thus $T^*\in\K_{p'}^{sur}(M^*,X^*)$ by Fact \ref{281}.(iv). By Fact \ref{281}.(ii) there is a quotient space $Z$ of $\ell^{p'}$ (respectively, of $c_0$ if $p=1$) and compact operators $A\in\K(M^*,Z)$, $B\in\K(Z,X^*)$ such that $T^*=BA$. By assumption, $B\in\A(Z,X^*)$ and thus $T^*\in\A(M^*,X^*)$ by the operator ideal property. It then follows from the principle of local reflexivity that $T\in\A(X,M)$, see e.g. \cite[Theorem 11.7.4]{Pietsch80}. 
\smallskip

(iv)$\Rightarrow$(i) Suppose $T\in\K_{up}(Y,X^*)$ for an arbitrary Banach space $Y$. By \eqref{KupisKpsur} and Fact \ref{281}.(iv) we have $T^*\in\K_p^{inj}(X^{**},Y^*)$. Thus, according to Fact \ref{281}.(i), there is a closed subspace $M\subset\ell^p$ and compact operators $A\in\K(X^{**},M)$ and $B\in\K(M,Y^*)$ such that $T^*=BA$. 

Next, let $j_X:X\to X^{**}$ and $j_Y:Y\to Y^{**}$ denote the canonical isometric embeddings. By assumption $Aj_X\in \K(X,M)=\A(X,M)$, and thus $(Aj_X)^*\in\A(M^*,X^*)$. It follows that \begin{equation*}\label{122022}
T=(Aj_X)^*B^*j_Y\in \A(Y,X^*)
\end{equation*}
by the operator ideal property. Consequently, $X^*$ has the uniform $\K_{up}$-AP and thus the $\K_{up}$-AP by Proposition \ref{KupAPisuniform}.
\end{proof}

\section{The \texorpdfstring{$\mathcal K_{\MakeLowercase{up}}$}{Kup}-approximation property in the case of \texorpdfstring{$1\le \MakeLowercase{p}<2$}{1 leq p<2}}\label{section3}

In this section we establish the main results of this paper concerning the $\K_{up}$-AP for $1\le p<2$. As an application, we provide a negative answer to the case $p=1$ of questions of J.M. Kim \cite[Problem 1]{Kim17_2}; namely, the $\K_{u1}$-AP implies neither the $\SK_1$-AP nor the AP, and the $\SK_1$-AP implies neither the $\K_{u1}$-AP nor the AP. We also provide an answer to a query of Kim posed in \cite[Example 4.4]{Kim20}; namely, the $\mathcal W_1$-AP does not imply the AP, where $\mathcal W_1=(\mathcal W_1,\V_{\mathcal W_1})$ denotes the Banach operator ideal of the weakly $1$-compact operators \cite{SK02}.
\smallskip

Our first main result follows from results established in Section \ref{section2}.
\begin{theorem}\label{theorem1}
Suppose that $1\leq p< q< 2$ and let $X$ be a Banach space. If $X$ has the $\K_{up}$-AP, then $X$ has the $\K_{uq}$-AP.
\end{theorem}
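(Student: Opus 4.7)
The plan is to reduce the $\K_{up}$-AP (and $\K_{uq}$-AP) to their uniform versions and then exploit the reverse monotonicity $\K_{uq}\subset\K_{up}$ that was already secured in Section~\ref{section2}. The whole argument should fit in three short steps and rely on no new idea beyond combining two previously established facts.

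First, I would use Proposition \ref{KupAPisuniform} to replace the hypothesis by its uniform version: the $\K_{up}$-AP of $X$ is equivalent to the uniform $\K_{up}$-AP, meaning that $\K_{up}(Y,X)\subset\A(Y,X)$ for every Banach space $Y$. This reformulation is crucial because the uniform $\I$-AP is phrased in terms of set-inclusions of operator classes rather than norm density, so it is stable under shrinking the class $\I$.

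Second, since $1\le p<q\le 2$, Proposition \ref{inclusionofKS}.(ii) gives $\K_{uq}\subset\K_{up}$. Consequently, for every Banach space $Y$,
\[
\K_{uq}(Y,X)\subset\K_{up}(Y,X)\subset\A(Y,X),
\]
which is exactly the assertion that $X$ has the uniform $\K_{uq}$-AP.

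Finally, I would invoke Proposition \ref{KupAPisuniform} once more, this time with $q$ in place of $p$, to upgrade the uniform $\K_{uq}$-AP back to the $\K_{uq}$-AP. I do not anticipate any obstacle here: the hard analytic content (the factorisation identity \eqref{KupisKupKup} behind Proposition \ref{KupAPisuniform}, and the embedding $\ell^q\hookrightarrow L^p[0,1]$ used in Proposition \ref{inclusionofKS}) has already been encapsulated in the two propositions cited, so the proof of Theorem \ref{theorem1} should amount to little more than chaining them together.
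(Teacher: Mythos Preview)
Your proposal is correct and follows exactly the paper's own argument: reduce to the uniform $\K_{up}$-AP, apply the inclusion $\K_{uq}\subset\K_{up}$ from Proposition~\ref{inclusionofKS}.(ii) to obtain the uniform $\K_{uq}$-AP, and then invoke Proposition~\ref{KupAPisuniform} to upgrade back. The only cosmetic difference is that the forward implication ``$\K_{up}$-AP $\Rightarrow$ uniform $\K_{up}$-AP'' is automatic from $\|\cdot\|\le\|\cdot\|_{\K_{up}}$ and does not require Proposition~\ref{KupAPisuniform}.
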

\begin{proof}
Suppose that $X$ has the $\K_{up}$-AP and let $Y$ be an arbitrary Banach space. It follows by the assumption that $X$ has the uniform $\K_{up}$-AP, and thus
\[\K_{uq}(Y,X)\subset\K_{up}(Y,X)\subset\A(Y,X),\]
where the former inclusion holds by Proposition \ref{inclusionofKS}.(ii). This shows that $X$ has the uniform $\K_{uq}$-AP, and thus $X$ has the $\K_{uq}$-AP by Proposition \ref{KupAPisuniform}.
\end{proof}

Recall from \cite[0.2]{Pisier86} or \cite[31.5]{DF93} that the bounded operator $T\in\mathcal L(X,Y)$ is called \emph{compactly approximable}, denoted $T\in\mathcal{CA}(X,Y)$, if for all $\varepsilon>0$ and all compact subsets $K\subset X$ there is a bounded finite-rank operator $U\in \F(X,Y)$ such that \[\sup_{x\in K}||Ux-Tx||<\varepsilon.\]
The class $\mathcal{CA}=(\mathcal{CA},||\cdot||)$ is a Banach operator ideal equipped with the uniform operator norm $\V$. Furthermore, if $X$ or $Y$ has the AP, then 
\begin{equation}\label{3112}
\mathcal{CA}(X,Y)=\mathcal L(X,Y).
\end{equation} 
We refer to \cite[Proposition 4.1]{TW22} for a proof of these facts.
\smallskip

We proceed with a result which involves a factorisation result of compactly approximable operators due to Pisier \cite{Pisier80} and a lemma of John \cite{Jo} on the approximation property for reflexive spaces. This complements a similar result obtained in \cite[Theorem 2.2]{TW22}, which in turn combines a factorisation result of Kwapien and Maurey with John's lemma. See also \cite[Theorem 2.2]{Godefroy} and its subsequent comments as well as the remark on \cite[p. 248]{DJT95} for other results in this direction. Recall that $\ell^p$ and all closed subspaces $M\subset\ell^p$ have cotype 2 whenever $1\le p\le 2$ and type 2 whenever $2\le p<\infty$, see e.g. \cite[Theorem 6.2.14 and Remark 6.2.11.($f$)]{AK06}.
\begin{theorem}\label{Pisier-John}
Let $X$ and $Y$ be Banach spaces. Suppose that $X^*$ has cotype 2 and that $Y$ has cotype 2 and the AP. Then \begin{equation*}
\K(X,M)=\A(X,M)\end{equation*}
for every closed subspace $M\subset Y$.
\end{theorem}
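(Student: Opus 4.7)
The plan is to factor the compact operator $T\in\K(X,M)$ through a reflexive Banach space $Z$ that has the approximation property, and then use the AP of $Z$ together with a co-restriction argument to produce finite-rank approximants of $T$ landing in $M$. The two main ingredients are Pisier's factorisation theorem from \cite{Pisier80}, which supplies a DFJP-type reflexive intermediate space under the cotype $2$ hypotheses, and John's lemma \cite{Jo} on the AP for reflexive spaces, which ensures that this intermediate space inherits the AP from $Y$.

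Concretely, let $j\colon M\hookrightarrow Y$ denote the isometric inclusion, so that $jT\in\K(X,Y)\subseteq\CA(X,Y)$. Since $X^*$ and $Y$ both have cotype $2$, Pisier's theorem applies to the compactly approximable operator $jT$ and yields a factorisation $jT=BA$ through a reflexive Banach space $Z$ with $A\in\K(X,Z)$, $B\in\K(Z,Y)$, the construction being of DFJP type so that one may assume $A$ has dense range in $Z$ (otherwise one co-restricts to $Z_0:=\overline{A(X)}\subseteq Z$, which is itself a reflexive space to which the same argument applies). John's lemma then implies that $Z$ has the AP, using that $Y$ has the AP and $Z$ is the reflexive intermediate space attached to a compact operator into $Y$. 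By Grothendieck's classical characterisation of the AP, $A\in\A(X,Z)$, so there are $A_n\in\F(X,Z)$ with $\|A_n-A\|\to 0$.

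To convert $(BA_n)$ into finite-rank operators from $X$ into $M$ rather than merely into $Y$, observe that $BA(X)=jT(X)\subseteq M$ combined with the density of $A(X)$ in $Z$ and the closedness of $M$ forces $B(Z)\subseteq M$ by continuity of $B$; hence $B$ may be regarded as an operator $Z\to M$, and $F_n:=BA_n\in\F(X,M)$ satisfies $\|F_n-T\|_{\mathcal{L}(X,M)}\le\|B\|\,\|A_n-A\|\to 0$, yielding $T\in\A(X,M)$. The main obstacle is aligning the precise statements of Pisier's factorisation theorem and John's lemma so that the reflexive intermediate space $Z$ really does inherit the AP from $Y$; once this is pinned down, the remainder reduces to Grothendieck's characterisation plus operator-ideal arithmetic, while the passage from $Y$-valued to $M$-valued approximants is handled by the dense-range property (or, equivalently, the co-restriction to $\overline{A(X)}$).
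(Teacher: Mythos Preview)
Your overall strategy matches the paper's, but both cited ingredients are misstated and the argument does not close as written.

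Pisier's result (\cite[Corollaire 2.16]{Pisier80}) does not give a ``DFJP-type'' reflexive space with compact factors; it gives a factorisation $jT = VU$ through a \emph{Hilbert space} $H$ with $U\in\mathcal L(X,H)$ and $V\in\mathcal L(H,Y)$ merely bounded. The crucial consequence is that $\overline{U(X)}\subset H$ has the AP automatically, as a closed subspace of a Hilbert space---the AP of the intermediate space has nothing to do with $Y$. Since the first factor is not known to be compact, your Grothendieck-based step in the final paragraph (approximate $A\in\K(X,Z)$ by finite ranks, then compose with $B$) is unjustified. Separately, John's lemma (\cite[Remarks 3]{Jo}) is not a statement that $Z$ inherits the AP from $Y$; that claim is false in general, since any separable reflexive space failing the AP still embeds into $C[0,1]$. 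What John's result actually supplies is the implication: if a compact operator $T\colon X\to M$ factors through a reflexive space with the AP via merely bounded maps, then $T\in\A(X,M)$. This is precisely the device that handles the non-compactness of Pisier's factors, and it replaces your final paragraph rather than preceding it.

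With these corrections the proof is exactly the paper's: Pisier gives $jT=VU$ through a Hilbert space $H$; one co-restricts to $\overline{U(X)}$ (your density argument showing the second factor lands in $M$ is correct and is used here to define $\tilde V\colon\overline{U(X)}\to M$); since $\overline{U(X)}$ is reflexive with the AP, John's lemma applied to the compact $T=\tilde V\tilde U$ yields $T\in\A(X,M)$. Note also that the inclusion $jT\in\CA(X,Y)$ should be justified via the AP of $Y$ and \eqref{3112}, not as a bare inclusion $\K\subset\CA$.
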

\begin{proof}
Let $M$ be a closed subspace of $Y$ and suppose that $T\in\K(X,M)$. Since $Y$ has the AP, it follows from \eqref{3112} that $jT\in\CA(X,Y)$, where $j:M\to Y$ is the inclusion map. Furthermore, since $X^*$ and $Y$ have cotype 2, a factorisation result of Pisier \cite[Corollaire 2.16]{Pisier80} (see also \cite[Theorem 4.1]{Pisier86}) yields a factorisation \[jT=VU,\] where  $U\in\mathcal L(X,H)$ and $V=\mathcal L(H,Y)$ are compatible bounded operators and $H$ is a Hilbert space. It follows that $T=\widetilde V\widetilde U$, where $\widetilde U:X\to \overline{U(X)}$ and $\widetilde V:\overline{U(X)}\to M$ are the bounded operators defined as follows:
\[\widetilde U:x\mapsto Ux\quad\text{ and }\quad \widetilde V:h\mapsto Vh.\]
Now, since $\overline{U(X)}\subset H$ is reflexive and has the AP, we have $T\in\A(X,M)$ according to \cite[Remarks 3]{Jo} (see also the proof of \cite[Theorem 2.2]{TW22} for a more direct argument). Consequently, $\K(X,M)=\A(X,M)$.
\end{proof}

By applying Theorem \ref{Pisier-John} we obtain the following result, which exhibits a large class of Banach spaces that have the $\K_{up}$-AP for all $1\le p<2$. For an alternative proof using a different method, see Remark \ref{0201} below.

\begin{theorem}\label{theorem2}
Suppose that $X$ is a Banach space with cotype 2. Then $X$ has the $\K_{up}$-AP for all $1\le p< 2$.
\end{theorem}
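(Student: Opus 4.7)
By Proposition \ref{KupAPisuniform}, it suffices to show that $X$ has the uniform $\K_{up}$-AP, that is, $\K_{up}(Y,X)\subset\A(Y,X)$ for every Banach space $Y$. Fix such a $Y$ and $T\in\K_{up}(Y,X)$. The strategy is to dualise and apply Theorem \ref{Pisier-John} to $T^*$. From $\K_{up}=\K_{p'}^{sur}$ in \eqref{KupisKpsur} together with Fact \ref{281}.(iv), one obtains $T^*\in\K_p^{inj}(X^*,Y^*)$. Then Fact \ref{281}.(i) provides a factorisation $T^*=BA$ through a closed subspace $M\subset\ell^p$, with $A\in\K(X^*,M)$ and $B\in\K(M,Y^*)$.

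Next I apply Theorem \ref{Pisier-John} to $A\in\K(X^*,M)$, viewing $M$ as a closed subspace of $\ell^p$. This requires two cotype hypotheses. First, $X^{**}$ has cotype 2: since cotype 2 is determined by finite-dimensional subspaces and the principle of local reflexivity guarantees that every finite-dimensional subspace of $X^{**}$ is $(1+\varepsilon)$-isomorphic to a subspace of $X$, cotype 2 passes from $X$ to $X^{**}$. Second, $\ell^p$ has cotype 2 (as $1\le p\le 2$) and the AP. Theorem \ref{Pisier-John} thus yields $A\in\A(X^*,M)$, and the operator ideal property of $\A$ gives $T^*=BA\in\A(X^*,Y^*)$. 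Finally, the principle of local reflexivity in the form of \cite[Theorem 11.7.4]{Pietsch80}, used exactly as in the (iii)$\Rightarrow$(iv) step of the proof of Theorem \ref{28761}, transfers this conclusion to $T\in\A(Y,X)$, completing the proof.

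The key move is the dualisation. A more direct attempt would factor $T\in\K_{p'}^{sur}(Y,X)$ through a quotient $Z$ of $\ell^{p'}$ via Fact \ref{281}.(ii) and try to approximate the resulting $B\in\K(Z,X)$. However, Theorem \ref{Pisier-John} applied in that direction would require $X$ itself to embed into a cotype 2 space with the AP, which is not automatic for an arbitrary cotype 2 space. Passing to the adjoint instead puts $X^*$ in the domain position of Theorem \ref{Pisier-John}, so one only needs the local consequence that $X^{**}$ inherits cotype 2 from $X$, while the target ambient is the concrete space $\ell^p$ which always satisfies both hypotheses. Recognising this is the main obstacle; the remaining steps are bookkeeping with the dualities provided by Fact \ref{281} and the known closure properties of $\A$ under the principle of local reflexivity.
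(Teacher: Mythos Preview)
Your proof is correct and follows essentially the same route as the paper's own argument: reduce to the uniform $\K_{up}$-AP via Proposition \ref{KupAPisuniform}, dualise $T$ to obtain $T^*\in\K_p^{inj}(X^*,Y^*)$, factor through a closed subspace $M\subset\ell^p$, apply Theorem \ref{Pisier-John} to the compact factor $A\in\K(X^*,M)$, and transfer back via local reflexivity. The only cosmetic difference is that the paper cites \cite[Corollary 11.9]{DJT95} for the fact that $X^{**}$ inherits cotype 2 from $X$, whereas you spell out the underlying local reflexivity argument directly.
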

\begin{proof}
Suppose that $1\le p<2$ and let $T\in\K_{up}(Y,X)$, where $Y$ is an arbitrary Banach space. By Proposition \ref{KupAPisuniform} it suffices to verify that $T\in\A(Y,X)$. 

Towards this, note that $T^{*}\in\K_{p}^{inj}(X^*,Y^*)$ by \eqref{KupisKpsur} and Fact \ref{281}.(iv). Thus there is, according to Fact \ref{281}.(i), a closed subspace $M\subset\ell^p$ and compact operators $A\in\K(X^*,M)$ and $B\in\K(M,Y^*)$ such that 
\[T^*=BA.\] 
Since $X$ has cotype 2, the bidual $X^{**}$ has cotype 2, see e.g. \cite[Corollary 11.9]{DJT95}. Thus, by applying Theorem \ref{Pisier-John}, one gets that 
\[A\in\K(X^*,M)=\A(X^*,M).\] 
It follows from the operator ideal property that $T^*\in\A(X^*,Y^*)$, and thus $T\in\A(Y,X)$, see e.g. \cite[Theorem 11.7.4]{Pietsch80}. This completes the proof.
\end{proof}
\begin{remark}\label{0201}
Theorem \ref{theorem2} can also be established by an approach which involves the duality between the $\K_{up}$-AP and the $\SK_p$-AP due to J.M. Kim \cite{Kim15} and Lassalle and Turco \cite{LT17}. In fact, suppose that $1\le p<2$ and let $X$ be a Banach space with cotype 2. By applying a classical result of Maurey on absolutely $p$-summing operators and a characterisation of the $\SK_p$-AP for dual spaces due to Delgado et al. \cite{DPS10_2}, it is shown in \cite[Proposition 3.8]{Wir} that $X^*$ has the $\SK_p$-AP. Consequently, $X$ has the $\K_{up}$-AP by \cite[Theorem 1.1]{Kim15} if $1<p<2$ and by \cite[Theorem 4.7]{LT17} if $p=1$.
\end{remark}
The following example shows that the monotonicity property in Theorem \ref{theorem1} does not extend to the case of $1\le p<2<q<\infty$. 
\begin{eexample}\label{example}
Suppose that $1\le p< 2<q<\infty$. Then there is a reflexive Banach space that has the $\K_{up}$-AP but fails the $\K_{uq}$-AP. In fact, let $X\subset\ell^q$ be a closed subspace that fails the AP, see \cite[Theorem 2.d.6]{LT77}. Then $X$ fails the $\SK_q$-AP according to \cite[Theorem 1]{Oja12}, and thus $X^*$ fails the $\K_{uq}$-AP by \cite[Theorem 1.1]{Kim15} (see \eqref{eq:KupimpliesSKp} below). Moreover, since $X$ has type 2, the dual space $X^*$ has cotype 2, see e.g. \cite[Proposition 11.10]{DJT95}. Consequently, $X^*$ has the $\K_{up}$-AP by Theorem \ref{theorem2}.
\end{eexample}

In \cite[Section 5]{Kim17_2} J.M. Kim discusses the relationship between the AP, the $\SK_p$-AP and the $\K_{up}$-AP. In particular, the author demonstrates for all $1<p<2$ that the $\K_{up}$-AP implies neither the $\SK_p$-AP nor the AP, and vice versa, the $\SK_p$-AP implies neither the $\K_{up}$-AP nor the AP.  Whether or not the same holds for $p=1$ or for $2<p<\infty$ was stated as a problem \cite[Problem 1]{Kim17_2}.
 \smallskip
 
The reasoning above by Kim relies among others on results on the approximation property of order $p$, whose study was initiated by Saphar \cite{Saphar70}, and a duality result of Kim. In fact, according to \cite[Theorem 1.1]{Kim15}, the following claims hold for all $1<p<\infty$ and all Banach spaces $X$:
\begin{align}
\label{eq:KupimpliesSKp}&\text{If }X^*\text{ has the }\K_{up}\text{-AP, then }X\text{ has the }\SK_{p}\text{-AP.}\\
\label{eq:SKpimpliesKup}&\text{If } X^*\text{ has the }\SK_p\text{-AP, then }X\text{ has the }\K_{up}\text{-AP.}
\end{align} 
Subsequently it was shown by Kim \cite[Theorem 1.1]{Kim17_2} that \eqref{eq:KupimpliesSKp} also holds for $p=1$, and by Lassalle and Turco \cite[Theorem 4.7]{LT17} that also the claim \eqref{eq:SKpimpliesKup} holds for $p=1$. See also \cite[Theorem 2.5]{KLT21} for a general result that includes both statements \eqref{eq:KupimpliesSKp} and \eqref{eq:SKpimpliesKup} for $p=1$. 
\smallskip

We will next demonstrate that Theorem \ref{theorem1} and Theorem \ref{theorem2} together with these duality results yield an answer to the case $p=1$ of \cite[Problem 1]{Kim17_2}, where the following questions were posed by J.M. Kim (with our notation): 
\begin{itemize}
\item[(Q1)]\emph{If $X$ has the $\K_{u1}$-AP (respectively, the $\SK_1$-AP), then does $X$ have the $\SK_1$-AP (respectively, the $\K_{u1}$-AP) or the AP ?}
\end{itemize}
The analogous questions were also asked for $2<p<\infty$ and we will in Section 5 draw attention to a few related questions and remarks that could potentially be of use for the case $2<p<\infty$.
 
We also obtain a negative answer to the case $p=1$ of the follow-up questions \cite[Problem 2]{Kim17_2} of Kim:
\begin{itemize}
\item[(Q2)]\emph{If $X^*$ has the $\K_{u1}$-AP (respectively, the $\SK_1$-AP), then does $X$ have the $\K_{u1}$-AP (respectively, the $\SK_1$-AP) ?}
\end{itemize}

Recall for any $1\le p<2$ that there is a closed subspace $X\subset\ell^p$ that fails the AP due to Szankowski, see \cite{Sza78} or \cite[Theorem 1.g.4]{LT79}.

\begin{proposition}\label{answer1}
Suppose that $1<q<2$ and let $X\subset\ell^q$ be a closed subspace that fails the AP. Then $X$ has the $\K_{u1}$-AP but fails the $\SK_1$-AP. Furthermore, the dual space $X^*$ has the $\SK_1$-AP but fails both the AP and the $\K_{u1}$-AP. 

In particular, the $\K_{u1}$-AP implies neither the $\SK_1$-AP nor the AP, and the $\SK_1$-AP implies neither the $\K_{u1}$-AP nor the AP. This answers (Q1) above in the negative. Moreover, note that the fact that the closed subspace $X\subset\ell^q$ in Proposition \ref{answer1} is reflexive yields a negative answer to (Q2) above.
\end{proposition}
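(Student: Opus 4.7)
The plan is to verify in order each of the five assertions about $X$ and $X^*$, using that $X$ is reflexive (as a closed subspace of $\ell^q$ with $q>1$), together with Theorems \ref{theorem1} and \ref{theorem2}, Oja's \cite[Theorem 1]{Oja12} (as invoked in Example \ref{example}), and the duality results \eqref{eq:KupimpliesSKp}--\eqref{eq:SKpimpliesKup}.

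The easy claims can be dispatched first. Since $X$ is a closed subspace of $\ell^q$ with $q\le 2$, $X$ has cotype $2$, so Theorem \ref{theorem2} yields the $\K_{up}$-AP for all $1\le p<2$; in particular $X$ has the $\K_{u1}$-AP. The failure of AP for $X$ is the standing hypothesis, and the failure of AP for $X^*$ follows from the classical fact that $X^*$ having AP forces $X=X^{**}$ to have AP. The $\SK_1$-AP for $X^*$ follows from the $\K_{u1}$-AP for $X$ by applying \eqref{eq:KupimpliesSKp} with $X^*$ in place of $X$, using reflexivity so that $X^{**}=X$.

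The main substantive step is showing that $X$ fails the $\SK_1$-AP, which I would prove by contradiction. Assume $X$ has the $\SK_1$-AP. Applying \eqref{eq:SKpimpliesKup} with $X^*$ in place of $X$ (again using reflexivity) gives that $X^*$ has the $\K_{u1}$-AP. Theorem \ref{theorem1}, combined with $1<q<2$, promotes this to the $\K_{uq}$-AP for $X^*$. Then \eqref{eq:KupimpliesSKp} applied directly to the pair $(X,X^*)$ yields that $X$ has the $\SK_q$-AP. Since $X\subset\ell^q$, Oja's theorem---the contrapositive of the direction cited in Example \ref{example}---forces $X$ to have the AP, contradicting the hypothesis. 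The key insight (and the only nontrivial point of the argument) is to use Theorem \ref{theorem1} on the dual space $X^*$ in order to raise the exponent from $1$ to $q$, and then transfer back to $X$ via the duality so that Oja's result on subspaces of $\ell^q$ becomes applicable.

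Finally, $X^*$ fails the $\K_{u1}$-AP, since otherwise \eqref{eq:KupimpliesSKp} applied to $(X,X^*)$ would give $X$ the $\SK_1$-AP, contradicting the preceding paragraph.
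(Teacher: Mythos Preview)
Your proof is correct and follows essentially the same approach as the paper's: cotype 2 plus Theorem \ref{theorem2} gives the $\K_{u1}$-AP for $X$, duality transfers this to the $\SK_1$-AP for $X^*$, and the core contradiction runs through Theorem \ref{theorem1} on $X^*$ and Oja's theorem on subspaces of $\ell^q$. The only (cosmetic) difference is the order in which the two negative claims are established---the paper first shows directly that $X^*$ fails the $\K_{u1}$-AP and then deduces that $X$ fails the $\SK_1$-AP, whereas you go in the reverse direction; the underlying chain of implications is identical.
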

\begin{proof}
Since $X$ has cotype 2, it has the $\K_{u1}$-AP by Theorem \ref{theorem2}. Consequently, since $X$ is reflexive, the dual space $X^*$ has the $\SK_1$-AP according to \cite[Theorem 1.1]{Kim17_2}.

Next, we show that $X^*$ fails the $\K_{u1}$-AP. For this, assume towards a contradiction that $X^*$ has the $\K_{u1}$-AP. Then $X^*$ has the $\K_{uq}$-AP by Theorem \ref{theorem1}. Consequently, $X$ has the $\SK_q$-AP by the duality \eqref{eq:KupimpliesSKp}. But since $X\subset\ell^q$ is a closed subspace, it follows from \cite[Theorem 1]{Oja12} that $X$ has the AP, which is a contradiction. Thus $X^*$ fails the $\K_{u1}$-AP. 

Since $X^*$ is reflexive and fails the $\K_{u1}$-AP, the duality result \cite[Theorem 1.3]{Kim17_2} or \cite[Theorem 4.7]{LT17} yields that $X$ fails the $\SK_1$-AP. Moreover, $X^*$ fails the AP by Fact \ref{facts2612} or \cite[Theorem 1.e.7]{LT77}.
\end{proof}

It is also interesting to observe that even the combination of the $\K_{up}$-AP and the $\SK_p$-AP for all $1\le p<2$ does not imply the AP  as our following example shows.
 \begin{eexample}
Let $P$ denote the Banach space that fails the AP constructed by Pisier in \cite{Pisier83} (see also \cite[Section 10]{Pisier86}), which has the following properties: 
\begin{itemize}
\item $P\widehat\otimes_\pi P=P\widehat\otimes_\varepsilon P$.
\item Both $P$ and $P^*$ have cotype 2. 
\end{itemize}
Now, $P$ has the $\K_{up}$-AP for all $1\le p< 2$ by Theorem \ref{theorem2}. The same is true for $P^*$ and thus the duality result \eqref{eq:KupimpliesSKp} of Kim, i.e. \cite[Theorem 1.1]{Kim15} for $p>1$ and \cite[Theorem 1.1]{Kim17_2} for $p=1$, yields that $P$ also has the $\SK_p$-AP for all $1\le p< 2$. 
\end{eexample}
As a final observation of this section we note that Theorem \ref{theorem2} also yields an answer to a query posed in \cite[Example 4.4]{Kim20} for $p=1$; namely, the $\mathcal W_1$-AP does not imply the AP. Here $\mathcal W_1=(\mathcal W_1,\V_{\mathcal W_1})$ denotes the Banach operator ideal of the weakly 1-compact operators, which is a special case of the weakly $p$-compact operators $\mathcal W_p=(\mathcal W_p,\V_{\mathcal W_p})$ by Sinha and Karn \cite{SK02}. Recall by definition that the operator $T\in\mathcal W_1(X,Y)$ if there is a weakly 1-summable sequence $(y_k)\in\ell_w^1(Y)$ such that
\begin{equation}\label{1710}
T(B_X)\subset\,1\text{-co}\{y_k\}= \{\sum_{k=1}^\infty\lambda_k y_k\mid(\lambda_k)\in B_{c_0}\}.\end{equation}
The ideal norm $\V_{\mathcal W_1}$ is defined by \[||T||_{\mathcal W_1}=\inf\{||(y_k)||_{1,w}\mid \eqref{1710}\text{ holds for }(y_k)\in\ell_w^1(Y)\}\]
for all $T\in\mathcal W_1(X,Y)$, see \cite[p. 864]{Kim20}. Note that \begin{equation}\label{Ku1subset}
\K_{u1}\subset\mathcal W_1\end{equation} 
since $\ell^1_u(X)\subset\ell^1_w(X)$ for any Banach space $X$.

\begin{proposition} 
Suppose that $X$ is a reflexive Banach space with cotype 2. Then $X$ has the $\mathcal W_1$-AP. 

In particular, the $\mathcal W_1$-AP does not imply the AP, which answers the query posed in \cite[Example 4.4]{Kim20}.
\end{proposition}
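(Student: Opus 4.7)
The plan is to combine Theorem~\ref{theorem2} with an isometric identification of the ideals $\mathcal W_1$ and $\K_{u1}$ when the target Banach space is reflexive. Since $X$ has cotype 2, Theorem~\ref{theorem2} immediately yields that $X$ has the $\K_{u1}$-AP. Thus if I can show that $\mathcal W_1(Y,X) = \K_{u1}(Y,X)$ with identical ideal norms for every Banach space $Y$, then the $\Vert\cdot\Vert_{\K_{u1}}$-closure of $\F(Y,X)$ coincides with the $\Vert\cdot\Vert_{\mathcal W_1}$-closure of $\F(Y,X)$, and the $\K_{u1}$-AP of $X$ transfers directly to the $\mathcal W_1$-AP.

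By \eqref{Ku1subset} and the definitions of the ideal norms $\Vert\cdot\Vert_{\K_{u1}}$ and $\Vert\cdot\Vert_{\mathcal W_1}$ as infima of $\Vert(y_k)\Vert_{1,w}$ over representations $T(B_Y)\subset 1\text{-co}\{y_k\}$, the sought identification reduces to the equality $\ell_w^1(X) = \ell_u^1(X)$ with matching norm $\Vert\cdot\Vert_{1,w}$ for the reflexive Banach space $X$. Indeed, once every $(y_k)\in\ell_w^1(X)$ is automatically in $\ell_u^1(X)$, both infima are taken over the same set of witnesses and therefore coincide; combined with \eqref{Ku1subset}, this yields the isometric identification.

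To establish $\ell_w^1(X)\subset \ell_u^1(X)$ for reflexive $X$, I would proceed via the standard correspondence between $\ell_w^1(X)$ and $\mathcal L(c_0,X)$: each $(x_k)\in\ell_w^1(X)$ defines a bounded operator $T:c_0\to X$ with $Te_k = x_k$ and $\Vert T\Vert = \Vert(x_k)\Vert_{1,w}$, and $(x_k)\in\ell_u^1(X)$ holds precisely when the tail norms $\Vert(0,\ldots,0,x_k,x_{k+1},\ldots)\Vert_{1,w}$ (which equal the operator norms of the tail truncations of $T$) tend to zero, which in turn is equivalent to $T\in\A(c_0,X)=\K(c_0,X)$ (using that $\ell^1=c_0^*$ has the AP, so $\K(c_0,X)=\A(c_0,X)$ holds for every $X$). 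The task thus reduces to the classical fact that every bounded operator $T:c_0\to X$ is compact whenever $X$ is reflexive: reflexivity of $X$ renders $T$ weakly compact, Gantmacher's theorem then makes the adjoint $T^*:X^*\to\ell^1$ weakly compact, the Schur property of $\ell^1$ upgrades this to norm compactness of $T^*$, and Schauder's theorem returns compactness to $T$. This chain of classical facts is the only non-trivial input, and once it is in hand the conclusion of the proposition is immediate.
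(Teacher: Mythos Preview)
Your proof is correct and follows essentially the same route as the paper: both arguments reduce to the fact that $\ell_w^1(X)=\ell_u^1(X)$ for reflexive $X$, so that $\mathcal W_1(Y,X)\subset\K_{u1}(Y,X)$, and then invoke Theorem~\ref{theorem2} together with the inequality $\|\cdot\|_{\mathcal W_1}\le\|\cdot\|_{\K_{u1}}$. The only difference is cosmetic: the paper cites the Bessaga--Pe\l{}czy\'nski theorem directly for $\ell_w^1(X)=\ell_u^1(X)$, whereas you reprove the reflexive case via the identification $\ell_w^1(X)\cong\mathcal L(c_0,X)$ and a Gantmacher--Schur--Schauder argument, and you observe (correctly but unnecessarily) that the two ideal norms actually coincide rather than merely satisfy an inequality.
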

\begin{proof} We will apply a similar argument as in \cite[Example 4.1]{Kim20}. For this, suppose that $T\in\mathcal W_1(Y,X)$ for an arbitrary Banach space $Y$. Then there is a weakly 1-summable sequence $(x_k)\in\ell_w^1(X)$ such that $T(B_Y)\subset 1\text{-co}\{x_k\}$.
 Since $X$ is reflexive, a classical result of Bessaga and Pe\l{}czy\'{n}ski yields that $(x_k)\in\ell_u^1(X)$, see e.g. \cite[Theorem 2.4.11]{AK06}, and thus $T\in\K_{u1}(Y,X)$. Moreover, since $X$ has the $\K_{u1}$-AP by Theorem \ref{theorem2}, we have that $T\in\overline{\F(Y,X)}^{\V_{\K_{u1}}}$. It follows that $T\in\overline{\F(Y,X)}^{\V_{\mathcal W_1}}$ since $||\cdot||_{\mathcal W_1}\le ||\cdot ||_{\mathcal K_{u1}}$ by \eqref{Ku1subset}. Consequently, $X$ has the $\mathcal W_1$-AP.
\end{proof}
\section{The failure of the \texorpdfstring{$\mathcal{SK}_\MakeLowercase{p}$}{SKp}-AP for \texorpdfstring{$\MakeLowercase{p}>2$}{p>2}}\label{section4}

Let $1\le p<\infty$ and let $X$ be a Banach space. Recall that a subset $K\subset X$ is called \emph{relatively p-compact} if $K\subset p\text{-co}\{x_k\}$ for a strongly $p$-summable sequence $(x_k)\in\ell_s^p(X)$, see \cite[pp. 19--20]{SK02}. Following Sinha and Karn \cite[Section 6]{SK02}, the Banach space $X$ is said to have the \emph{$p$-approximation property} ($p$-AP) if for all $\varepsilon>0$ and all relatively $p$-compact subsets $K\subset X$ there is a bounded finite-rank operator $U\in\F(X)$ such that 
\[\sup_{x\in K}||Ux-x||<\varepsilon.\] 
If $X$ has the AP, then $X$ has the $p$-AP for all $1\le p<\infty$ since relatively $p$-compact subsets are relatively compact, see \cite[p. 20]{SK02}. Moreover, it follows from \cite[Theorem 2.1]{DOPS09} that the Banach space $X$ has the $p$-AP if and only if $X$ has the uniform $\SK_p$-AP.

Let $2<p,q<\infty$ be such that $q>2p/(p-2)$. In \cite[Corollary 2.9]{CK10} Y.S. Choi and J.M. Kim established that a variant of the closed subspace $E\subset\ell^q$ that fails the AP constructed by Davie \cite{Da73} fails the $p$-AP, i.e. the uniform $\SK_p$-AP. It follows from the monotonicity property \eqref{monotonicityofSKp} that $E$ fails the uniform $\SK_r$-AP for all $r\ge p$. In this section we complement the result of Choi and Kim in the setting of the $\SK_p$-AP by showing that for all $2<p,q<\infty$ there is a closed subspace $X\subset\ell^q$ that fails the $\SK_r$-AP for all $r\ge p$. In particular, we do not require the condition $q>2p/(p-2)$ in our example. We also point out here that it appears unknown whether the AP, the $\SK_p$-AP and the uniform $\SK_{p}$-AP are different properties for $2<p<\infty$, see Section \ref{section5}. 

The closed subspace $X\subset\ell^q$ we uncover is also essentially the subspace by Davie. However, our approach is based on a factorisation argument of Reinov \cite[Lemma 1.1]{Reinov82} in contrast to the approach of Choi and Kim in \cite{CK10}. For other similar constructions based on \cite[Lemma 1.1]{Reinov82} we refer to \cite[Theorem 3.9]{TW22} and \cite[Proposition 4.3]{Wir}.

Suppose that $1\le p<\infty$ and let $X$ and $Y$ be arbitrary Banach spaces. Recall from \cite[18.1 and 18.2]{Pietsch80} or \cite[pp. 111-112]{DJT95} that the operator $T\in\mathcal L(X,Y)$ is called $p$-nuclear, denoted $T\in\mathcal N_p(X,Y)$, if there is a strongly $p$-summable sequence $(x_k^*)\in\ell_s^p(X^*)$ and a weakly $p'$-summable sequence $(y_k)\in \ell_w^{p'}(Y)$ such that 
\begin{equation}\label{pnuclear}
Tx=\sum_{k=1}^\infty x_k^*(x)y_k,\qquad x\in X.
\end{equation}
The class $\mathcal N_p=(\mathcal N_p,\V_{\mathcal N_p})$ is a Banach operator ideal where the $p$-nuclear norm $||\cdot||_{\mathcal N_p}$ is defined by 
\[||T||_{\Nu_p}=\inf \big\{||(x^*_k)||_p\,||(y_k)||_{p',w}\mid \eqref{pnuclear}\text{ holds}\,\big\}\]
for all $T\in\Nu_p(X,Y)$.

Following Persson and Pietsch \cite{PP69}, the operator $T\in\mathcal L(X,Y)$ is called \emph{quasi $p$-nuclear}, denoted $T\in\QN_p(X,Y)$, if there is a strongly $p$-summable sequence $(x_k^*)\in\ell_s^p(X^*)$ such that 
\begin{equation}\label{3}
||Tx||\le (\sum_{k=1}^\infty |x_k^*(x)|^p)^{1/p},\quad x\in X.
\end{equation}
The class $\QN_p=(\QN_p,\V_{\QN_p})$ is a Banach operator ideal where the ideal norm $\V_{\QN_p}$ is defined by 
\[||T||_{\QN_p}=\inf \{||(x_k^*)||_p\mid \eqref{3}\text{ holds for }(x^*_k)\in\ell_s^p(X^*)\}\]
for all $T\in\QN_p(X,Y)$.
 
It is known that $\QN_p$ coincides with the injective hull of $\mathcal N_p$, see \cite[Satz 38 and Satz 39] {PP69} or the comment before \cite[Theorem 6]{Pietsch14}. Moreover, it follows from results in \cite[Section 3]{DPS10} (see also the discussion preceding \cite[Corollary 2.7]{GLT12}) that
\begin{equation}\label{duality}
\QN_p=\SK_p^{dual}
\end{equation}
for all $1\le p<\infty$.
\begin{proposition}
Let $2<p,q<\infty$. Then there is a closed subspace $X\subset\ell^q$ that fails the $\SK_r$-AP for all $r\ge p$.
\end{proposition}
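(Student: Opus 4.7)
The plan is to reduce the proposition to producing a single operator $T \in \SK_p(Y, X) \setminus \A(Y, X)$ for some Banach space $Y$ and some closed subspace $X \subset \ell^q$. Such a $T$ simultaneously witnesses failure of the $\SK_r$-AP for every $r \geq p$. Indeed, by the monotonicity \eqref{monotonicityofSKp} we have $T \in \SK_r(Y, X)$, and (B1) gives $\|T - F\| \le \|T - F\|_{\SK_r}$ for every $F \in \F(Y, X)$; hence any $\|\cdot\|_{\SK_r}$-approximation of $T$ by finite rank operators would force $T \in \A(Y, X)$, contradicting the choice of $T$. So the whole proposition is absorbed into the construction of one such operator for one such subspace.

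For the subspace I would take $X \subset \ell^q$ to be (essentially) the Davie subspace from \cite{Da73, Da75}, which fails the AP for every $q > 2$ and which is available without any numerical relation between $p$ and $q$ (in contrast to the restriction $q > 2p/(p-2)$ used in \cite{CK10}). Since $X$ fails the AP, Grothendieck's characterisation supplies a Banach space $Y_0$ and a compact operator $T_0 \in \K(Y_0, X) \setminus \A(Y_0, X)$; the task is to refine the choice of $T_0$ so as to additionally lie in $\SK_p$. Here I would invoke Reinov's factorisation lemma \cite[Lemma 1.1]{Reinov82}, following the template of \cite[Theorem 3.9]{TW22} and \cite[Proposition 4.3]{Wir}: Reinov's lemma provides a compact non-approximable operator whose dual admits a factorisation through a quasi-$p$-nuclear map on the dual side. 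The identity $\QN_p = \SK_p^{dual}$ from \eqref{duality} then converts this quasi-$p$-nuclear operator into an element of $\SK_p(Y, X)$. Reflexivity of $X$ for $q > 2$, together with the principle of local reflexivity (used exactly as in the proof of Theorem \ref{28761}), guarantees that non-approximability is preserved under the dualisation and that the operator obtained still has $X$ as its codomain.

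The main obstacle is the precise execution of Reinov's lemma in Davie's setting: one has to identify the compact operator in the construction to which the lemma applies, verify that the resulting quasi-$p$-nuclear factorisation is genuinely available for \emph{every} $p > 2$ with no further constraints on $q$, and then dualise carefully so as to obtain an operator that actually lands in $\SK_p(Y, X)$ rather than in some larger surjective or injective hull. Once a single $T \in \SK_p(Y, X) \setminus \A(Y, X)$ has been produced, the reduction in the first paragraph yields the failure of the $\SK_r$-AP for all $r \ge p$ at once, completing the proof.
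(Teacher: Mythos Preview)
Your reduction in the first paragraph is correct, but it asks for strictly more than the proposition: producing $T\in\SK_p(Y,X)\setminus\A(Y,X)$ is exactly the statement that $X$ fails the \emph{uniform} $\SK_p$-AP (the $p$-AP of Sinha--Karn), whereas the proposition only asks that $X$ fail the $\SK_r$-AP. The paper explicitly remarks (see the introduction to Section~\ref{section4} and Section~\ref{section5}) that for $p>2$ it is open whether these two properties differ, so you are proposing to prove something the paper does not claim and which, without further restrictions, is not known.

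The methods you cite do not deliver this stronger statement. In the Reinov/Davie template used in the paper (and in \cite[Theorem 3.9]{TW22}, \cite[Proposition 4.3]{Wir}) one separates the operator $U$ from the finite-rank operators via a trace functional $\phi(T)=\operatorname{tr}(\Delta_2 j_2 T Q)$; this $\phi$ is bounded with respect to $\|\cdot\|_{\QN_r}$ because one uses the multiplication rule $\Nu_{r'}\circ\QN_r\subset\Nu_1$, but it is \emph{not} shown to be continuous for the operator norm. Hence the argument yields $U\notin\overline{\F}^{\|\cdot\|_{\QN_r}}$ (equivalently $U^*\notin\overline{\F}^{\|\cdot\|_{\SK_r}}$), not $U\notin\A$. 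The only known route to $T\in\SK_p\setminus\A$ for $X\subset\ell^q$ is Choi--Kim \cite{CK10}, which is precisely where the unwanted constraint $q>2p/(p-2)$ enters. So your ``refinement'' step~3 is a genuine gap: you would have to supply a new argument that the specific operator produced is non-approximable in the uniform norm, and nothing in Reinov's lemma or the cited templates provides this.

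The paper avoids this problem by \emph{not} fixing a single witness: it first defines $X\subset\ell^q$ once (depending only on $p,q$), then for each $r\ge p$ it chooses the diagonal factorisation $\Delta=\Delta_2\Delta_1$ through $\ell^r$ with exponents tuned so that $\Delta_1$ is $r$-nuclear and $\Delta_2$ is $r'$-nuclear. This makes the trace functional $\QN_r$-continuous for that particular $r$, and one obtains a witness $U_r\in\QN_r\setminus\overline{\F}^{\|\cdot\|_{\QN_r}}$ whose adjoint lands in $\SK_r(F_r^*,X)$. The witness varies with $r$, but $X$ does not; the monotonicity \eqref{monotonicityofSKp} is never invoked.
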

\begin{proof}
Let $A=(a_{k,j})_{k,j\in\mathbb N}$ be an infinite matrix of scalars with the following properties: \begin{enumerate}[(i)]
\item $A^2=0$, \item tr $A:=\sum_{k=1}^\infty a_{k,k}\neq 0$, \item $\sum_{k=1}^\infty \lambda_k^b<\infty$ for all $b>2/3$, where $\lambda_k:=\sup_{j\in\mathbb N}|a_{k,j}|>0$ for all $k\in\mathbb N$.
\end{enumerate}
Recall that such a matrix exists due to Davie \cite{Da73,Da75} (see also \cite[Theorem 2.d.3]{LT77}). Since $2<p,q<\infty$ we may pick a positive real number $c>0$ such that \[0<c<\min\{\frac{1}{3}-\frac{2}{3p},\frac{1}{3}-\frac{2}{3q}\}.\] 
Denote $s:=\frac{2}{3q}+c$ and let $B=(b_{k,j})_{k,j\in\mathbb N}$ be the matrix with elements $b_{k,j}=\big(\frac{\lambda_j}{\lambda_k}\big)^s a_{k,j}$ for all $k,j\in\mathbb N$. By (i) and (ii), the matrix $B$ has the following properties:
\begin{enumerate}
\item[(iv)] $B^2=0$ and
\item[(v)] tr $B=\sum_{k=1}^\infty b_{k,k}\neq 0$.
\end{enumerate}
(Note that the matrix $B$ is defined as the matrix in \cite[Theorem 2.d.6]{LT77} but with a different exponent $s$.) 

Since $c>0$ we have $sq>2/3$. Consequently,
\begin{align*}
\Big(\sum_{j=1}^\infty &|b_{k,j}|^q\Big)^{\frac{1}{q}}=\Big(\sum_{j=1}^\infty \Big(\frac{\lambda_j}{\lambda_k}\Big)^{sq}|a_{k,j}|^q\Big)^{\frac{1}{q}}\\
&\leq \Big(\sum_{j=1}^\infty \Big(\frac{\lambda_j}{\lambda_k}\Big)^{sq}\lambda_k^q\Big)^{\frac{1}{q}}=\lambda_k^{1-s}\Big(\sum_{j=1}^\infty \lambda_j^{sq}\Big)^{\frac{1}{q}}<\infty
\end{align*}
for all $k\in\mathbb N$, where the last expression is finite by (iii). It follows that $y_k:=(b_{k,1},b_{k,2},\ldots)\in\ell^q$ for all $k\in\mathbb N$, and moreover,
\begin{equation}\label{normofyk}
||y_k||_q\le L\cdot\lambda_k^t \qquad(k\in\mathbb N),
\end{equation}
where $L:=\big(\sum_{j=1}^\infty\lambda_j^{sq}\big)^{\frac{1}{q}}<\infty$ and $t:=1-s$.
\smallskip

\emph{Claim.} The closed linear span 
\[X:=\overline{\text{span}}\{y_k\mid k\in\N\}\subset\ell^q\] 
fails the $\SK_r$-AP for all $r\geq p$.
\smallskip

Towards this, fix $r\in\mathbb R$ such that $r\geq p$. In order to establish the claim, we will next define relevant spaces and operators which are illustrated in \eqref{commuting} below. Since 
\begin{equation}\label{tis1}
t=1-s=1-\frac{2}{3q}-c>1-\frac{2}{3q}-\Big(\frac{1}{3}-\frac{2}{3q}\Big)=\frac{2}{3}\,,
\end{equation} we have 
\begin{equation}\label{y_ksummable}\sum_{k=1}^\infty ||y_k||_q\leq L\cdot\sum_{k=1}^\infty \lambda_k^t<\infty
\end{equation}
by (\ref{normofyk}) and (iii). It follows that the matrix $B$ defines a 1-nuclear operator $B:\ell^{q'}\to\ell^{q'}$ defined by
\[Bx=\big(\langle x,y_k\rangle\big)_{k=1}^\infty,\qquad x=(x_k)\in\ell^{q'}\]
for which (iv) and (v) hold. This means that $Bx=\sum_{k=1}^\infty y_k(x)e_k$ for all $x\in\ell^{q'}$, where $(e_k)$ denotes the unit vector basis in $\ell^{q'}$. 

Define the following operators:
\begin{align*}
&V:\ell^{q'}\to \ell^\infty, \quad Vx=\big(\lambda_k^{-t}\langle x,y_k\rangle\big)_{k=1}^\infty, \quad x=(x_k)\in\ell^{q'},\\
&\Delta:\ell^\infty\to\ell^{q'},\quad \Delta x=\big(\lambda_k^t x_k\big)_{k=1}^\infty, \qquad x=(x_k)\in\ell^\infty.
\end{align*}
By \eqref{normofyk} the operator $V$ is bounded (with $||V||\le L$). Furthermore, $\Delta$ is a 1-nuclear diagonal operator by (iii) since $t>2/3$ by \eqref{tis1}. Clearly $B=\Delta V$. 

Let $E:=\ell^{q'}/\ker V$. Note that $\ker V=X^\perp$ since $\lambda_k\neq 0$ for all $k\in\mathbb N$. Thus
\begin{equation}\label{HahnBanach}
X=X^{\perp\perp}\cong (\ell^{q'}/X^\perp)^*= E^*.
\end{equation} 
Denote $d:=\min\{r',q'\}>1$ and pick $a>0$ such that 
\begin{equation}\label{ais}
\frac{2}{3r}<a<t-\frac{2}{3d}.
\end{equation} 
This is possible since 
\[t-\frac{2}{3r'}>\frac{2}{3}-\frac{2}{3r'}=\frac{2}{3r}\]
by \eqref{tis1}, and since
\[t-\frac{2}{3q'}=1-s-\frac{2}{3q'}=1-\Big(\frac{2}{3q}+c\Big)-\frac{2}{3q'}=\frac{1}{3}-c>\frac{2}{3p}\ge\frac{2}{3r},\] 
where the last inequality holds since $r\ge p$. 

Next, define the following diagonal operators:
\begin{align*}
&\Delta_1:\ell^\infty\to \ell^r,\quad \Delta_1 x=\big(\lambda_k^a x_k\big)_{k=1}^\infty,\quad x=(x_k)\in\ell^\infty,\\
&\Delta_2: \ell^r\to\ell^{q'},\quad \Delta_2 x=\big(\lambda_k^{t-a} x_k\big)_{k=1}^\infty,\quad x=(x_k)\in\ell^r.
\end{align*}
It follows from \eqref{ais} that $ar>2/3$, and thus $(\lambda_k^a)\in \ell^r$ by (iii). Hence $\Delta_1$ is $r$-nuclear. Similarly we have $(\lambda_k^{t-a})\in\ell^d$, and thus $\Delta_2$ is $d$-nuclear. Clearly $\Delta=\Delta_2\Delta_1$. 

Consider the closed subspace $F_0:=\overline{V\ell^{q'}}\subset\ell^\infty$ and let $j_1:F_0\hookrightarrow\ell^\infty$ denote the inclusion map. Let $Q:\ell^{q'}\to E$ be the canonical projection onto $E=\ell^{q'}/\ker V$ and let $\widetilde V:E\to F_0$ be the injective operator induced by $V$ such that $V=j_1\widetilde VQ$. 

Furthermore, define the closed subspace $F:=\overline{\Delta_1 j_1 F_0}\subset\ell^r$ and let $j_2:F\hookrightarrow\ell^r$ denote the inclusion map. Define also the operator 
\[\widetilde\Delta_1:F_0\to F,\quad \widetilde\Delta_1 x=\Delta_1 j_1x,\quad x\in F_0.\]
This means that $j_2\widetilde\Delta_1=\Delta_1j_1$. Finally, let $U:=\widetilde\Delta_1\widetilde V:E\to F$. 

The above defined spaces and operators are illustrated in the following commuting diagram:
\begin{equation}\label{commuting}
\begin{tikzcd}
\ell^{q'} \arrow{rr}{V} \arrow{d}{Q} &&\ell^\infty\arrow{rd}{\Delta_1} \arrow{r}{\Delta} &\ell^{q'}\\
E\arrow[bend right=35,swap]{rr}{U}\arrow{r}{\widetilde V} & F_0 \arrow[ru,hook,"j_1"]\arrow{r}{\widetilde{\Delta}_1} & F \arrow[r,hook,"j_2"] &\ell^r\arrow[swap]{u}{\Delta_2}\\
\end{tikzcd} 
\end{equation}
We proceed by verifying that 
\begin{equation}\label{Unotin}
U\in\QN_r(E,F) \setminus\overline{\F(E,F)}^{\V_{\QN_r}}.
\end{equation} 
Firstly, since $\Delta_1$ is $r$-nuclear, the operator $\tilde\Delta_1$ is quasi $r$-nuclear by \cite[Satz 39]{PP69}. Hence $U\in \QN_r(E,F)$ by the operator ideal property.
We continue by showing that $U\notin \overline{\F(E,F)}^{\V_{\QN_r}}$.

For this, define
\begin{equation}\label{phi}
\phi(T)=\text{tr}(\Delta_2j_2TQ)
\end{equation}
for every $T\in\QN_r(E,F)$. The mapping $T\mapsto \phi(T)$ defines a continuous linear functional $\phi\in\QN_r(E,F)^*$. In fact, since $d=\min\{r',q'\}\le r'$ we have 
\[\Delta_2\in\mathcal N_{d}(\ell^r,\ell^{q'})\subset\mathcal N_{r'}(\ell^r,\ell^{q'})\]
by monotonicity, see e.g. \cite[Corollary 5.24]{DJT95}. Thus by the multiplication table in \text{\cite[Satz 48]{PP69}} the following hold: $\Delta_2j_2TQ\in\mathcal N_1(\ell^{q'})$ and
\begin{equation}\label{129}
||\Delta_2j_2TQ||_{\mathcal N_1}\leq ||\Delta_2||_{\mathcal N_{r'}}||j_2TQ||_{\mathcal{QN}_{r}} 
\end{equation} 
for all $T\in\QN_r(E,F)$. Since $\ell^{q'}$ has the (metric) AP, it follows that the trace on the right hand side in \eqref{phi} is well-defined and 
\begin{equation}\label{2010}
|\phi(T)|=|\text{tr}(\Delta_2j_2TQ)|\le ||\Delta_2j_2TQ||_{\Nu_1}
\end{equation}
for all $T\in\QN_r(E,F)$, see e.g. \cite[10.3.2]{Pietsch80}. By combining the norm estimates \eqref{129}, \eqref{2010} and (B3), one obtains that
\[|\phi(T)|\le ||\Delta_2||_{\Nu_{r'}}||j_2TQ||_{\QN_r}\le ||\Delta_2||_{\Nu_{r'}}||T||_{\QN_r}\]
for all $T\in\QN_r(E,F)$. Thus the mapping $T\mapsto \phi(T)$ defines a continuous linear functional $\phi\in\QN_r(E,F)^*$.

Next, note that
\begin{equation}\label{phi_2}
\phi(U)=\text{tr}(\Delta_2j_2UQ)=\text{tr} (\Delta V)=\text{tr}(B)\neq 0,
\end{equation}
where the final step holds by (v). However, we claim that 
\begin{equation}\label{finiterank}
\phi(x^*\otimes y)=0
\end{equation} 
for all bounded rank-one operators $x^*\otimes y\in \F(E,F)$, which by linearity and continuity implies that $\phi(T)=0$ for all $T\in\overline{\F(E,F)}^{\V_{\QN_r}}$. This will show that $U\notin \overline{\F(E,F)}^{\V_{\QN_r}}$, since $\phi(U)\neq 0$ by \eqref{phi_2}.

Towards the claim \eqref{finiterank}, let $x^*\in E^*$ and $y\in F$. By construction $F=\overline{UQ\ell^{q'}}$ and we assume first that $y\in UQ\ell^{q'}$. Thus $y=UQx$ for some $x\in\ell^{q'}$. It then follows that
\begin{align}
\label{30121}\phi(x^*\otimes y)&=\text{tr}(\Delta_2j_2(x^*\otimes y)Q)=\text{tr}(Q^*x^*\otimes \Delta_2j_2y)\\
\notag&=\langle \Delta_2 j_2 y,Q^*x^*\rangle
=\langle Q\Delta_2j_2y,x^*\rangle\\
\notag&=\langle Q\Delta_2j_2UQx,x^*\rangle=\langle QBx,x^*\rangle=0,
\end{align}
where the last equality above holds since $QB=0$. In fact, $\Delta j_1\widetilde VQB=B^2=0$ by (iv), and since $\Delta j_1 \widetilde V$ is injective, we have $QB=0$.

For an arbitrary $y\in F$, pick a sequence $(z_n)\subset UQ\ell^{q'}$ such that $||z_n-y||\to 0$ as $n\to\infty$. Then 
\[
||x^*\otimes y-x^*\otimes z_n||_{\QN_r}=||x^*\otimes (y-z_n)||_{\QN_r}
=||x^*||\,||z_n-y||\to 0
\]
as $n\to\infty$ by ideal norm property (B2). Consequently, by continuity and \eqref{30121}, we have that
\[\phi(x^*\otimes y)=\lim_{n\to\infty}\phi(x^*\otimes z_n)=0,\]
which then yields \eqref{finiterank}. Thus $U\notin\overline{\F(E,F)}^{\V_{\QN_p}}$.

We have thus established the claim in \eqref{Unotin}. It then follows by the duality \eqref{duality} and reflexivity of the spaces $E$ and $F$ that 
\[U^*\in\SK_r(F^*,E^*)\setminus\overline{\F(F^*,E^*)}^{\V_{\SK_r}}.\] 
Consequently, $E^*$ fails the $\SK_r$-AP and thus $X$ fails the $\SK_r$-AP since $X\cong E^*$ by \eqref{HahnBanach}.
\end{proof}

\section{Final remarks and problems}\label{section5}
It is unknown whether the AP, the $\K_{up}$-AP and the $\SK_p$-AP  are distinct 
properties whenever $2<p<\infty$. More precisely, in \cite[Problem 1]{Kim17_2} J.M. Kim posed the following question (with our notation and excluding the case $p=1$ which was answered in the negative in Propostion \ref{answer1}):

\begin{question}\label{question1}
(J.M. Kim) Let $2<p<\infty$. If $X$ has the $\K_{up}$-AP (respectively, the $\SK_p$-AP), then does $X$ have the $\SK_p$-AP (respectively, the $\K_{up}$-AP) or the AP ? 
\end{question}
In this section we draw attention to a few related questions and remarks suggested by our results. The first of our questions is motivated by Theorem \ref{Pisier-John}. For a related question, see \cite[Question 5.2]{TW22}.
\begin{question}\label{qu:3012}
Let $2<p<\infty$. Is there a Banach space $X$ such that $X^*$ fails the AP and $\K(X,M)=\A(X,M)$ for every closed subspace $M\subset\ell^p$ ? We note that for such a Banach space $X$, the dual $X^*$ would have the $\K_{up}$-AP by Theorem \ref{28761}.
\end{question}
It is known that there are Banach spaces that have the uniform $\SK_p$-AP but fail the $\SK_p$-AP whenever $1\le p<2$, see \cite[p. 2460]{LT13}. In fact, \emph{every} Banach space has the uniform $\SK_p$-AP for $1\le p<2$ (see \cite[Theorem 6.4]{SK02} or \cite[Corollary 2.5]{DOPS09}), and by  \cite[Theorem 2.4]{DPS10_2} there are Banach spaces that fail the $\SK_p$-AP. However, it appears unknown whether there are Banach spaces that have the uniform $\SK_p$-AP but fail the $\SK_p$-AP for $2<p<\infty$.

Recall the following result \cite[Theorem 1]{Oja12} due to Oja which has been used in Example \ref{example} and Proposition \ref{answer1}: If $X$ is a closed subspace of $L^p(\mu)$, then $X$ has the AP if and only if $X$ has the $\SK_p$-AP. 

Oja's result suggests the following question on the relationship between the $\SK_p$-AP and the uniform $\SK_p$-AP.
\begin{question}\label{qu1}
Let $2<p<\infty$. Is there a significant class $\mathcal C$ of Banach spaces for which the $\SK_p$-AP is equivalent to the uniform $\SK_p$-AP for all Banach spaces $X\in\mathcal C$ ?
\end{question}
By a significant class above we mean (somewhat vaguely) a class which is defined independently of any approximation properties and is large enough to contain spaces that have the AP and spaces that fail the uniform $\SK_p$-AP. Note that the (uniform) $\K_{up}$-AP implies the uniform $\SK_p$-AP since $\SK_p\subset\K_{up}$, see \eqref{30124}. Consequently, the $\K_{up}$-AP implies the $\SK_p$-AP in such a class $\mathcal C$ of Banach spaces.

Let $2<p<\infty$ be fixed. Lemma \ref{2876}.(i) points to the following class of Banach spaces which seems relevant for Question \ref{qu1} at first sight: let $\mathcal C'$ denote the class of all Banach spaces $X$ for which \[\SK_p(Y,X)=\SK_p\circ\SK_p(Y,X)\]for every Banach space $Y$. In fact, Lemma \ref{2876}.(i) implies that if $X\in\mathcal C'$, then $X$ has the $\SK_p$-AP if and only if $X$ has the uniform $\SK_p$-AP. However, Example \ref{lastexample} below shows that the class $\mathcal C'$ only contains the finite-dimensional Banach spaces, and is thus not a relevant class here.
\begin{eexample}\label{lastexample}
Suppose that $2<p<\infty$ and let $X$ be an arbitrary infinite-dimensional Banach space. We claim that
\begin{equation}\label{claim}
\SK_p\circ\SK_p(\ell^{p'},X)\subsetneq \SK_p(\ell^{p'},X).
\end{equation}
In fact, since $p>2$ and $X$ is infinite-dimensional, there is a relatively $p$-compact subset $K\subset X$ that is not relatively $(p/2)$-compact according to \cite[Proposition 3.5.(3)]{PD11}, see also \cite[Proposition 20.(3)]{Pietsch14}. By the definition we may assume that $K=p$-co$\{x_k\}$ for some strongly $p$-summable sequence $(x_k)\in\ell_s^p(X)$. Let $\theta:\ell^{p'}\to X$ be the bounded operator defined as follows: 
\[\theta(a)=\sum_{k=1}^\infty a_kx_k,\quad a=(a_k)\in\ell^{p'}.\] 
This means that $K=\theta(B_{\ell^{p'}})$, and thus $\theta\in\SK_p(\ell^{p'},X)$. We next verify that $\theta\notin\SK_p\circ\SK_p(\ell^{p'},X)$. 

Assume towards a contradiction that $\theta\in\SK_p\circ\SK_p(\ell^{p'},X)$. Let $\theta=VU$ be a factorisation where $U,V\in\SK_p$ are compatible Sinha-Karn $p$-compact operators. According to the duality result \cite[Proposition 3.8]{DPS10}, the adjoint operators $U^*,V^*\in\QN_p$ are quasi $p$-nuclear, and thus $\theta^*\in \QN_p\circ\QN_p(X^*,\ell^p)$. 

Next, according to the multiplication rule \cite[Satz 48]{PP69} we have $\theta^*\in\QN_{p/2}(X^*,\ell^p)$, and thus $\theta\in\SK_{p/2}(\ell^{p'},X)$ by \cite[Proposition 3.8]{DPS10}. This means that $\theta(B_{\ell_{p'}})=K\subset X$ is relatively $(p/2)$-compact, which is a contradiction. We have thus shown that 
\[\theta\in\SK_p(\ell^{p'},X)\setminus\SK_p\circ \SK_p(\ell^{p'},X),\] 
which yields the claim \eqref{claim}.
\end{eexample}

As a final remark we recall that it is a long standing open question whether the Hardy space $H^\infty$ of the bounded analytic functions on the open unit disc has the AP, see \cite[Problem 1.e.10]{LT77} or \cite[5.2(4)]{DF93}. The connection here to Question \ref{question1} by Kim is the following observation: 
\[H^\infty\text{ has the }\K_{up}\text{-AP for all }1<p<\infty.\]
This follows from a similar argument as in \cite[Corollary 2.10]{DOPS09} where it was shown that all odd duals $(H^\infty)^*$, $(H^\infty)^{***}$, $\ldots$ have the uniform $\SK_p$-AP for all $1\le p<\infty$. In fact, fix $1<p<\infty$ for the argument. Recall from e.g. \cite[Section 1]{DOPS09} or \cite[21.7]{DF93} that the Banach space $X$ has the \emph{approximation property of order p} (AP$_p$) if the natural mapping 
\[\theta: Y^*\widehat\otimes_{g_p} X\to \Nu_p(Y,X)\]
is injective for all Banach spaces $Y$. Here $g_p$ denotes the Chevet-Saphar tensor norm and we refer to e.g. \cite[12.7]{DF93} for a description of $g_p$. A result of Bourgain and Reinov \cite[Theorem 1]{BR85} implies that the bidual $(H^\infty)^{**}$ has the AP$_{p'}$. It then follows that $(H^\infty)^*$ has the $\SK_p$-AP by \cite[Corollary 2.5]{DPS10_2}. Consequently, $H^\infty$ has the $\K_{up}$-AP by \cite[Theorem 1.1]{Kim15} (see \eqref{eq:SKpimpliesKup} above).

\section*{Acknowledgements} 
This paper is part of the Ph.D.-thesis of the author and he thanks his supervisor Hans-Olav Tylli for many valuable comments and remarks during the preparation of the manuscript. The author also thanks Pablo Turco for supplying reference \cite{KLT21} and for helpful comments.

This work was supported by the Magnus Ehrnrooth Foundation and the Swedish Cultural Foundation in Finland.

\bibliographystyle{plain}

\bibliography{bibliographyWirzenius}
\end{document}